\documentclass[11pt,letterpaper]{amsart}
\usepackage[utf8]{inputenc}

\usepackage{preamble}
\usepackage{natbib,amsthm,amssymb, url, hyperref}
\usepackage{graphicx}

\title{Nonnegative conorms, regular matroids, and the tropical Schottky problem}
\author{Yelena Mandelshtam}

\address{Department of Mathematics, University of Michigan, Ann Arbor, MI 48109, USA}
\email{yelenam@umich.edu}

\thanks{This research was partially supported by the National Science Foundation under Award DMS-2402069 and by the Bob Moses Fund of the Institute for Advanced Study.}

\subjclass[2020]{Primary 14T15; Secondary 05B35, 11H55, 14H42}

\keywords{Tropical Schottky problem, conorms, matroidal locus, regular matroids, cographic matroids, Voronoi-relevant vectors}

\begin{document}

\begin{abstract}
    We prove that a positive-definite quadratic form has nonnegative conorms if and only if it is matroidal, resolving a conjecture of Dutour Sikiri\'c and Kummer. We show that the positive conorm support determines a regular matroid, and the positive conorms are the coefficients of a unimodular integral decomposition of the form. As a result, we show that a positive-definite form lies in the tropical Schottky locus if and only if its conorms are nonnegative and its positive conorm support matroid is cographic. This answers a question of Chua,
Kummer, and Sturmfels and provides algorithms for the tropical Schottky decision and recovery problems in every genus. 
\end{abstract}
\maketitle

\section{Introduction}

The classical Schottky problem asks which principally polarized complex abelian varieties are Jacobians of algebraic curves. A period matrix
$\tau$ in the Siegel upper-half space $\mathfrak H_g$ determines a principally polarized abelian variety, and the classical Schottky locus consists of those period matrices arising from genus $g$ curves. In genus four, the closure of this locus is a hypersurface cut out by the Schottky--Igusa modular form, a degree-$16$ polynomial in theta constants \cite{Igusa,chua2019schottky}.

Its tropical counterpart asks which positive-definite real quadratic
forms arise as tropical Jacobians. Let $L$ be a rank-$g$ lattice and $Q$ a positive-definite quadratic form on
$L_{\RR}$. We say that $Q$ lies in the tropical Schottky locus
$\mathfrak{J}^{\mathrm{trop}}_g$ if there is a genus $g$ metric graph
$\Gamma$ and an isomorphism
\[
    L \cong H_1(\Gamma,\ZZ)
\]
under which $Q$ becomes the period form of $\Gamma$.

For a metric graph, the period form has a decomposition
\[
    Q_\Gamma=\sum_{f\in E(\Gamma)}\ell_fa_f^2,
\]
where the $a_f$ are its integral coedges. These coedges form a unimodular representation of $M^*(\Gamma)$. We recall the construction in \cref{sec:motivation}.

\begin{definition}
A finite family of integral covectors
\[
    (a_e)_{e\in E}\subseteq L^*
\]
is \emph{unimodular} if, after choosing an integral basis of $L$, every
maximal minor of the matrix whose columns are the $a_e$ is equal to
$0$ or $\pm1$.

A positive-definite quadratic form $Q$ on $L_{\RR}$ is
\emph{matroidal} if it admits a decomposition
\[
    Q=\sum_{e\in E}\lambda_e a_e^2,
    \qquad \lambda_e>0,
\]
for a unimodular integral configuration $(a_e)_{e\in E}$. The union
of the corresponding cones is called the \emph{matroidal locus}
\cite{meloviviani2012matroidal}.
\end{definition}

A matroid is \emph{regular} if it has a totally unimodular
representation, and it is \emph{cographic} if it is the dual
$M^*(G)$ of the graphic matroid of some graph $G$. The cographic matroidal forms are
exactly the forms arising from tropical Riemann matrices. The image of the tropical Torelli map is the subfan indexed by cographic matroids
\cite[Theorem~5.2.4]{brannetti2011torelli}.

The tropical Schottky problem amounts to recognizing the
cographic part of the matroidal locus directly from the quadratic
form. To formulate our result, set $V=L/2L.$ For each parity class $\alpha\in V$, its \emph{vonorm} is
\[
    \mu_Q(\alpha)
    =
    \min_{\substack{x\in L\\ [x]=\alpha}} Q[x].
\]
For every nonzero character $e\in V^*$, the corresponding normalized \emph{conorm} is
\[
    \lambda_e(Q)
    =
    -2^{1-g}
    \sum_{\alpha\in V}
    (-1)^{e(\alpha)}\mu_Q(\alpha).
\]

The vonorms and conorms have a direct interpretation in terms of the classical Schottky problem. In \cite{chua2019schottky}, Chua, Kummer, and Sturmfels use period matrices of the form
$\tau(t)=P(t)+itQ$, as $t\longrightarrow\infty$ to tropicalize theta constants. They show that the leading exponential order of the theta constant with characteristic $m=(m',m'')$ is determined by the tropical theta constant $\Theta_{m'}(Q)$, which in the present notation is
$\Theta_u(Q)
=
-\frac14\min_{\substack{x\in L\ [x]=u}}Q[x]
=
-\frac14\mu_Q(u)$. Thus the vonorms are, up to scaling and sign, the tropical theta constants. Their Fourier transforms $\vartheta_e(Q)$ in the notation of \cite{chua2019schottky} satisfy
$\lambda_e(Q)=2^{3-g}\vartheta_e(Q).$ In genus four, tropicalizing the Schottky--Igusa modular form yields exactly the conorm inequalities $\vartheta_e(Q)\geq0$ for every $0\neq e\in(L/2L)^*$ \cite[Theorem~4.9]{chua2019schottky}.

When all conorms are nonnegative, let $E(Q)=\{e\in V^*:\lambda_e(Q)>0\}.$ The elements of $E(Q)$ represent a binary matroid, which we denote by $M(Q)=M(E(Q)).$
Under the nonnegativity hypothesis of our main theorem, $E(Q)$ is the nonzero conorm support, so $M(Q)$ is the theta matroid introduced in \cite{chua2019schottky}. We review this formalism, together with our  conventions, in \cref{sec:conorms}.

Chua, Kummer, and Sturmfels proved that if $Q$ lies in the tropical Schottky locus, then $M(Q)$ is cographic and the positive conorms recover the corresponding edge lengths. They posed the converse as the question

 \begin{question}\cite[Question 4.11]{chua2019schottky}\label{ques:CKS}
     Let $Q$ be a positive-definite matrix such that the matroid $M(Q)$ is cographic with positive weights. Does this imply that $Q$ is in the tropical Schottky locus?
 \end{question}
 
In \cite{sikiric2022iso}, Dutour Sikiri\'c and Kummer proposed the stronger conjecture 
\begin{conj}\cite[Conjecture 2]{sikiric2022iso}\label{conj:DSK}
    A positive-definite matrix $Q$ lies in the matroidal locus if and only if $\la_e(Q) \geq 0$ for all $0 \neq e \in V^*$.
\end{conj}
They proved this conjecture for $g\leq5$
\cite[Theorems~6.4--6.5]{sikiric2022iso}. Together with the genus-four description above, this identifies the tropical Igusa locus with the matroidal locus, while the tropical Schottky locus is its cographic subfan.

Our main result proves \cref{conj:DSK} in every dimension. As noted in \cite{sikiric2022iso}, this also gives an affirmative answer to \cref{ques:CKS} in every genus and, since our proof is constructive, it yields explicit algorithms for tropical Schottky decision and recovery. In genus four, we also show that the remaining tropicalizations of the Schottky--Igusa presentations considered in \cite{chua2019schottky} impose no additional conditions: they detect the matroidal locus, but not its cographic sublocus.

The proof is motivated by the relationship between Voronoi-relevant vectors and simple cycles in the cographic case, conjectured by Caporaso–Viviani in \cite{caporaso2010torelli} and proved by Amini in \cite{amini2010lattice}. Inspired by this perspective, we isolate the structures in this picture that are shared by graphs and regular matroids and show that conorm positivity forces them to occur without any regular representation being given in advance. Once this is established, the remainder of the argument is self-contained and combines ideas from polyhedral geometry, tropical geometry, and matroid theory. 

In the upcoming subsection, we describe the graph model that motivates the construction. It is not strictly necessary to understand the proofs in the later sections, however it explains why the constructions are natural ones to consider. 

We briefly outline the proof: Fourier inversion expresses the vonorms in terms of the positive conorm support, which defines a binary matroid. This allows us to identify its cocircuits with parity classes that have a unique antipodal pair of shortest representatives, equivalently with the Voronoi-relevant vectors. We then show that the shortest lifts associated to a modular triple of cocircuits
satisfy a signed integral relation.

These relations propagate through basis exchanges and imply that the shortest lifts of the fundamental cocircuits of any matroid basis form a $\ZZ$-basis of $L$. From the cocircuit lifts avoiding each $e\in E$, we construct an integral covector $a_e$. The resulting covectors form a unimodular representation of the support matroid.
Finally, we prove
\[
    Q=\sum_{e\in E}\lambda_ea_e^2
\]
by constructing representatives in every parity class on which the two quadratic forms agree.

The rest of the paper is organized as follows. \cref{sec:conorms} reviews conorms and introduces the binary support matroid. \cref{sec:cocircuits} relates its cocircuits with Voronoi-relevant vectors. \cref{sec:modulartriples,sec:regularity} construct an integral unimodular representation and prove regularity. \cref{sec:reconstructing} reconstructs the quadratic form, and finally \cref{sec:main} contains the proofs of the main results and derives the tropical Schottky consequences and algorithms. The proof of \cref{conj:DSK} will follow as \cref{cor:nonnegative-conorm-criterion} from the stronger \cref{thm:main}.

\subsection{Graph flow lattices as motivation}\label{sec:motivation}

Let $G=(V(G),E(G))$ be a finite connected graph.  Choose an orientation of every edge.  The integral chain group is
\[
  C_1(G,\ZZ)=\ZZ^{E(G)},
\]
and the boundary map $\partial:C_1(G,\ZZ)\to \ZZ^{V(G)}$ sends an oriented edge to its head minus its tail.  The \emph{lattice of integer flows} is
\[
  \mathcal F(G)=\ker\partial=H_1(G,\ZZ).
\]
Its rank is the genus
\[
  g=|E(G)|-|V(G)|+1.
\]

If the edge $e$ has length $\ell_e>0$, then the metric graph defines the quadratic form
\[
  Q_G(z)=\sum_{e\in E(G)}\ell_e z_e^2,
  \qquad z=(z_e)_{e\in E(G)}\in \mathcal F(G).
\]
The restriction of the $e$th coordinate to $\mathcal F(G)$ is an integral covector
\[
  a_e:\mathcal F(G)\longrightarrow\ZZ,
  \qquad a_e(z)=z_e.
\]
Hence
\begin{equation*}
  Q_G=\sum_{e\in E(G)}\ell_e a_e^2.
\end{equation*}
The covectors $a_e$ are called \emph{coedges}.  A bridge has $a_e=0$ because no flow uses it. Our goal will be to recover such integral coordinate functionals $a_e$ from conorm positivity alone.

After choosing an integral basis of $H_1(G,\ZZ)$, place the coordinate columns of the $a_e$ into a matrix $A$.  Then the matrix of $Q_G$ is
\[
  Q_G=A\diag(\ell_e)A^{\mathsf T}.
\]
The column matroid of $A$ is the cographic matroid $M^*(G)$.

An oriented simple cycle gives a primitive flow with entries in $\{0,\pm1\}$.  It is support-minimal among nonzero flows.  In the cographic matroid $M^*(G)$, the cocircuits are the simple cycles of $G$.

For a graph flow lattice, these simple-cycle flows are exactly the Voronoi-relevant vectors.  More generally, Amini describes the full face poset of the Voronoi cell in terms of strongly connected orientations of subgraphs \cite{amini2010lattice}.  The part of that picture relevant here is that simple cycles correspond to cocircuits of the cographic matroid $M^*(G)$ and to the Voronoi-relevant vectors. 

The basic local relation among graph cycles is the \emph{theta relation}. As we prove in \cref{thm:metric-modular-triple}, conorm positivity forces the corresponding relation among shortest cocircuit lifts even when no graph or regular representation is known to exist. Suppose a subgraph consists of three internally disjoint paths $P_1,P_2,P_3$ joining the same two vertices.  It has three simple cycles
\[
  C_{12}=P_1-P_2,\qquad
  C_{13}=P_1-P_3,\qquad
  C_{23}=P_2-P_3,
\]
with the signed flow relation
\[
  C_{12}-C_{13}+C_{23}=0.
\]
In matroid language, the three cycle supports form a modular triple of cocircuits in the sense of \cref{def:modulartriple}. The heart of the argument in this paper is to show that the graph-theoretic theta relation persists without any graph or regular representation being given in advance: nonnegative conorms imply that the shortest lattice lifts of every modular triple of cocircuits satisfy the corresponding signed integral relation.

\vspace{1cm}
\textbf{Acknowledgements}\\
We thank Hadleigh Frost and Raul Penaguiao for conversations about Voronoi cells which have led to a much better understanding of the landscape. We also thank Mario Kummer for comments on the manuscript. Many ideas involved in this paper were discussed with ChatGPT. It was used as an exploratory tool for literature search, testing formulations, and checking intermediate arguments. The author takes full responsibility for all
statements and proofs in the paper. 

%This research was partially supported by the National Science Foundation under Award DMS2402069 and the Bob Moses Fund of the Institute for Advanced Study.

\section{Conorms and the binary support matroid}\label{sec:conorms}

In this section we develop the vonorm and conorm formalism, following \cite{CS}, summarized in the introduction and fix the notation used throughout the paper. We then give an intrinsic formulation of the theta matroid introduced by
Chua, Kummer, and Sturmfels \cite{chua2019schottky}. The remaining matroid-theoretic material is standard, but we record the short arguments and basis-exchange formulas needed later in order to fix notation.

Throughout, $L$ is a rank-$g$ lattice,
\[
    L_{\RR}=L\otimes_{\ZZ}\RR,
    \qquad
    L^*=\Hom(L,\ZZ),
\]
and $Q$ is a positive-definite quadratic form on $L_{\RR}$. Its associated symmetric bilinear form is
\[
    \langle x,y\rangle_Q
    =
    \frac{Q[x+y]-Q[x]-Q[y]}{2},
    \qquad
    Q[x]=\langle x,x\rangle_Q.
\]

Set $V=L/2L.$ This is a $g$-dimensional vector space over $\Ftwo$.  Its dual is $V^*=\Hom_{\Ftwo}(V,\Ftwo).$ For $e\in V^*$ and $\alpha\in V$, the expression $e(\alpha)$ is either $0$ or $1$.

\begin{definition}
Following Conway and Sloane \cite{CS}, for a parity class $\alpha\in V$, define the \emph{vonorm}
\[
  \mu(\alpha)=\min\{Q[x]:x\in L,\ x\bmod 2L=\alpha\}.
\]
Because $Q$ is positive-definite and every coset of $2L$ is discrete, the minimum exists.  Moreover, $\mu(\alpha) \geq 0$ for all $\alpha$, with equality if and only if $\alpha = 0$.
\end{definition}

For $e\in V^*$, let $\chi_e(\alpha)=(-1)^{e(\alpha)}.$ For a function  $f:V\to\RR$, use the unnormalized Fourier transform
\[
  \widehat f(e)=\sum_{\alpha\in V}\chi_e(\alpha)f(\alpha).
\]
Then Fourier inversion gives
\[
  f(\alpha)=2^{-g}\sum_{e\in V^*}\chi_e(\alpha)\widehat f(e).
\]

\begin{definition}
For $0\ne e\in V^*$, define the \emph{normalized conorm}
\begin{equation}\label{eq:lambda-def}
  \lambda_e=-2^{1-g}
  \sum_{\alpha\in V}(-1)^{e(\alpha)}\mu(\alpha)
  =-2^{1-g}\widehat\mu(e).
\end{equation}
\end{definition}

This classical normalization of Conway and Sloane \cite{CS} is convenient because, on a graph, $\lambda_e$ is an edge length after coedges with the same mod-$2$ class have been aggregated.  This matches the normalized conorm of Dutour Sikiri\'c--Kummer \cite{sikiric2022iso}. In the notation of Chua--Kummer--Sturmfels, after choosing an integral basis of $L$ and using the induced identification $V^*\cong(\mathbf F_2)^g$, one has $\lambda_e=2^{3-g}\vartheta_e(Q)$; the factor is positive and thus irrelevant to sign questions \cite{chua2019schottky}. 

Fourier inversion gives the following formula originally in \cite{CS}. We include a proof for completeness. 

\begin{proposition}\cite[Equation (9)]{CS}\label{prop:fourier-inversion}
For every $\alpha\in V$,
\[
  \mu(\alpha)=
  \sum_{\substack{0\ne e\in V^*\\e(\alpha)=1}}\lambda_e.
\]
\end{proposition}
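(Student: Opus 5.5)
We will apply Fourier inversion directly to $f=\mu$. Since $\lambda_e=-2^{1-g}\widehat\mu(e)$ for $e\ne0$, we have $\widehat\mu(e)=-2^{g-1}\lambda_e$. The only term not controlled by a conorm is $e=0$, where $\widehat\mu(0)=\sum_{\alpha\in V}\mu(\alpha)$. The inversion formula then reads
\[
  \mu(\alpha)=2^{-g}\widehat\mu(0)-\tfrac12\sum_{0\ne e\in V^*}(-1)^{e(\alpha)}\lambda_e .
\]

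The key step is to eliminate $\widehat\mu(0)$ using the normalization $\mu(0)=0$, which holds because the class $0$ contains $x=0$ and $Q$ is positive-definite. Evaluating the display at $\alpha=0$, where every character equals $1$, gives
\[
  2^{-g}\widehat\mu(0)=\tfrac12\sum_{0\ne e}\lambda_e .
\]
Substituting this back, we obtain
\[
  \mu(\alpha)=\tfrac12\sum_{0\ne e}\bigl(1-(-1)^{e(\alpha)}\bigr)\lambda_e .
\]

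Finally, the factor $\tfrac12\bigl(1-(-1)^{e(\alpha)}\bigr)$ equals $1$ if $e(\alpha)=1$ and $0$ if $e(\alpha)=0$. Hence the sum reduces to $\sum_{e(\alpha)=1}\lambda_e$, which is the claimed formula. The condition $e\ne0$ is automatic here, since $0(\alpha)=0$ for every $\alpha$.

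There is no real obstacle in this argument. The only points needing care are the sign and normalization bookkeeping (the factor $-2^{1-g}$ against the inverse factor $2^{-g}$) and noticing that the unknown zero-frequency term is exactly what the condition $\mu(0)=0$ determines.
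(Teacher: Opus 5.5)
Your proposal is correct and follows essentially the same route as the paper: Fourier inversion at $\alpha=0$, using $\mu(0)=0$, determines the zero-frequency term $\widehat\mu(0)$. The identity $\tfrac12\bigl(1-(-1)^{e(\alpha)}\bigr)=\mathbf 1_{e(\alpha)=1}$ then finishes the argument, and your handling of the factors $-2^{1-g}$ and $2^{-g}$ is correct.
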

\begin{proof}
    Fourier inversion at $0$ gives
\[
  \widehat\mu(0)+\sum_{e\ne0}\widehat\mu(e)=0.
\]
Observe
\begin{align*}
    \sum_{\substack{e\ne0\\e(\alpha)=1}}\lambda_e
  &=\frac12\sum_{e\ne0}\lambda_e\bigl(1-(-1)^{e(\alpha)}\bigr)\\
  &=2^{-g}\left(
  \widehat\mu(0)+
  \sum_{e\ne0}(-1)^{e(\alpha)}\widehat\mu(e)
  \right)\\
  &=\mu(\alpha).
\end{align*}
\end{proof}

From now on, we will assume, as in the hypothesis of \cref{conj:DSK}, that $\la_e \geq 0$ for all $0 \neq e \in V^*$. Let $$E = \{e \in V^*: \la_e > 0\}.$$

The following observation is already made by Conway and Sloane in \cite{CS} after their Equation 9.
\begin{lemma}[\cite{CS}]\label{lem:E-spans}
The set $E$ spans $V^*$.
\end{lemma}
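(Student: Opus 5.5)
The plan is a proof by contradiction via orthogonal complements, with \cref{prop:fourier-inversion} as the only input. Suppose $E$ does not span $V^*$. Then the span $W=\langle E\rangle\subsetneq V^*$ is a proper subspace. Since the pairing $V\times V^*\to\Ftwo$ is perfect, its annihilator
\[
W^\perp=\{\alpha\in V: e(\alpha)=0 \text{ for all } e\in W\}
\]
is nonzero. I will pick some $0\ne\alpha\in W^\perp$.

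Next I evaluate the vonorm at this $\alpha$ using \cref{prop:fourier-inversion}:
\[
\mu(\alpha)=\sum_{\substack{0\ne e\in V^*\\ e(\alpha)=1}}\lambda_e .
\]
By the standing assumption, $\lambda_e\ge 0$ for every $e\neq 0$, and $\lambda_e=0$ whenever $e\notin E$. So only terms with $e\in E$ and $e(\alpha)=1$ can contribute. But every $e\in E$ lies in $W$, and $\alpha$ annihilates $W$, so $e(\alpha)=0$ for all $e\in E$. Hence every term in the sum vanishes and $\mu(\alpha)=0$.

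This contradicts positive-definiteness. As recorded in the definition of the vonorm, $\mu(\alpha)=0$ only for $\alpha=0$: a nonzero parity class contains no zero vector, so every representative $x$ has $Q[x]>0$, and the minimum over the discrete coset is attained. Therefore $E$ spans $V^*$.

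There is no real obstacle here. The only point requiring care is the linear-algebra fact that a proper subspace of $V^*$ has a nonzero annihilator in $V$. This holds because $V$ is finite-dimensional over $\Ftwo$ and the evaluation pairing identifies $V$ with $(V^*)^*$.
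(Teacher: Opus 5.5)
Your proposal is correct and takes essentially the same approach as the paper. Both arguments pick a nonzero $\alpha\in V$ annihilating $\Span E$ and use the Fourier inversion formula, where $\lambda_e=0$ for $e\notin E$ under the nonnegativity hypothesis, to get $\mu(\alpha)=0$; this contradicts positive-definiteness.
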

\begin{proof}
    Suppose not, so $\Span E \subset V^*$. Then, since $V$ is the dual of $V^*$, there is some $0 \neq \alpha \in V$ such that $e(\alpha) = 0$ for all $e \in E$. However, then $\mu(\alpha) = 0$, a contradiction.
\end{proof}

Following \cite{chua2019schottky}, the nonzero conorm characteristics represent a binary matroid of rank $g$, which they call the theta matroid of $Q$. We give a basis-independent formulation using $V^*$.

The elements of $E$ are distinct nonzero vectors in $V^*$.  They represent a simple rank $g$ binary matroid $M=M(E)$. The matroid is defined as follows:
\begin{itemize}
\item a subset $S\subseteq E$ is independent if its elements are linearly independent in $V^*$;
\item $\rk_M(S)=\dim_{\Ftwo}\Span(S)$;
\item a basis is a $g$-element linearly independent subset.
\end{itemize}

Consider the evaluation map
\begin{equation*}
  \iota:V\longrightarrow\Ftwo^E,
  \qquad
  \iota(\alpha)=\bigl(e(\alpha)\bigr)_{e\in E}.
\end{equation*}
It is injective by \cref{lem:E-spans}.  Its image
\[
  C=\iota(V)\subseteq\Ftwo^E
\]
is the \emph{cocycle space} of $M$. The elements of $C$ are called \emph{cocycles}. For $\alpha\in V$, we write
\begin{equation*}
  D(\alpha)=\supp\iota(\alpha)
  =\{e\in E:e(\alpha)=1\}.
\end{equation*}

Then the formula of \cref{prop:fourier-inversion} becomes
\begin{equation}\label{eq:mu-D}
  \mu(\alpha)=\sum_{e\in D(\alpha)}\lambda_e.
\end{equation}

\begin{definition}
A \emph{cocircuit} of $M$ is a nonempty set $D(\alpha)$ that is inclusion-minimal among the supports of nonzero vectors in $C$.
\end{definition}

Equivalently, cocircuits are complements of hyperplanes (\cite{oxley}, Proposition 2.1.6).  A hyperplane is a maximal proper flat, or a maximal proper subset whose linear span has dimension $g-1$.

Let $B\subseteq E$ be a basis.  Since $B$ is a basis of $V^*$, there is a dual basis
\[
  \{\alpha_b^B:b\in B\}\subseteq V,
  \qquad
  b'(\alpha_b^B)=\delta_{b,b'}\quad(b,b'\in B).
\]
If a basis $B$ is fixed, we may omit the superscript in the notation.

\begin{definition}
The \emph{fundamental cocircuit} of $b\in B$ with respect to $B$ is
\[
  D_b(B)=D(\alpha_b^B).
\]
It is the unique cocircuit whose intersection with $B$ is exactly $\{b\}$ \cite[2.1 Exercise 10]{oxley}
\end{definition}.

We recall the following useful standard fact.

\begin{lemma}\label{lem:every-cocircuit-fundamental}
Every cocircuit is fundamental with respect to some basis.
\end{lemma}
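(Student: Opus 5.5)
Let $D$ be a cocircuit of $M$. By the characterization recalled above (\cite{oxley}, Proposition 2.1.6), $H=E\setminus D$ is a hyperplane of $M$. Since $E$ spans $V^*$ by \cref{lem:E-spans}, $H$ is a maximal proper subset of $E$ whose span has dimension $g-1$. The plan is to build a basis $B$ containing $H$-data plus exactly one element of $D$, and then to check that $D$ is the fundamental cocircuit of that element.

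Concretely, I will first choose a maximal linearly independent subset $I\subseteq H$. Then $|I|=\rk_M(H)=g-1$. Because $H$ is a proper flat and $D\neq\emptyset$, I pick any $d\in D$. The element $d$ lies outside $\Span(H)$, since otherwise $H\cup\{d\}$ would have rank $g-1$, contradicting the maximality of $H$ among sets of rank $g-1$ (flats are closed). Hence $B=I\cup\{d\}$ is linearly independent of size $g$, so it is a basis.

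Next, I will identify the fundamental cocircuit $D_d(B)=D(\alpha_d^B)$. By construction $b(\alpha_d^B)=0$ for all $b\in I$. Every $h\in H$ is a linear combination of elements of $I$, so $h(\alpha_d^B)=0$ as well. Therefore $D(\alpha_d^B)\subseteq E\setminus H=D$. Since $D(\alpha_d^B)$ is nonempty (it contains $d$) and $D$ is inclusion-minimal among supports of nonzero cocycles, equality holds: $D=D_d(B)$. Alternatively, one can invoke the uniqueness statement that $D_d(B)$ is the unique cocircuit meeting $B$ exactly in $\{d\}$, since $D\cap B=\{d\}$.

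There is no serious obstacle here. The only point requiring care is the claim $d\notin\Span(H)$, which uses the fact that a hyperplane is a flat, i.e. that it is closed under span within $E$. This follows directly from the stated definition of a hyperplane as a maximal subset of $E$ whose span has dimension $g-1$.
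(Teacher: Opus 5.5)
Your proof is correct and takes the same approach as the paper: choose a basis of the complementary hyperplane $H=E\setminus D$, adjoin any $d\in D$, and identify $D$ with $D_d(B)$. You also check that $d\notin\Span(H)$ and derive $D(\alpha_d^B)=D$ directly from minimality, where the paper instead cites the fact that $D_d(B)$ is the unique cocircuit meeting $B$ only in $d$.
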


\begin{proof}
Let $D$ be a cocircuit and let $H=E\setminus D$ be the complementary hyperplane.  Choose a basis $B_H$ of $H$ and an element $d\in D$.  Then $B=B_H\cup\{d\}$ is a basis of $M$, and $D$ is the unique cocircuit meeting $B$ only in $d$.  Hence $D=D_d(B)$.
\end{proof}

For $S\subseteq E$, the contraction $M/S$ has rank $\rk(M/S)=g-\rk_M(S)$ \cite[Proposition 3.1.6]{oxley}.
In the present representation this has a useful dual description.  Define
\begin{equation*}
  K_S=\{\alpha\in V:e(\alpha)=0\text{ for all }e\in S\}.
\end{equation*}
Then
\begin{equation*}
  \dim K_S=g-\dim\Span(S)=\rk(M/S).
\end{equation*}

We use the notion of a modular set of cocircuits from \cite{pendavingh2013skew}:

\begin{definition}\label{def:modulartriple}
Three distinct cocircuits $D_1,D_2,D_3$ form a \emph{modular triple} if, with $U=D_1\cup D_2\cup D_3$, and $S=E\setminus U,$ one has $\rk(M/S)=2$. Equivalently, the subspace $K_S\subseteq V$ has dimension $2$.
\end{definition}

We now turn our attention to a construction of modular triples through a single basis exchange. This is done with slightly different notation in \cite{pendavingh2013skew}. Suppose $B$ is a basis, $b\in B$, $f\notin B$, and that $B'=B-b+f$ is again a basis. Denote the dual basis for $B$ by $\alpha_b$.  Then $f(\alpha_b)=1$.  The dual basis for $B'$ is
\begin{equation*}
  \alpha_f'=\alpha_b,
  \qquad
  \alpha_c'=\alpha_c+f(\alpha_c)\alpha_b
  \quad(c\in B\setminus\{b\}).
\end{equation*}
Consequently,
\begin{equation}\label{eq:cocircuit-pivot}
  D_f(B')=D_b(B),
\end{equation}
and, for $c\in B\setminus\{b\}$,
\begin{equation}\label{eq:cocircuit-pivot-2}
  D_c(B')=
  \begin{cases}
    D_c(B),& f\notin D_c(B),\\
    D_c(B)\triangle D_b(B),& f\in D_c(B),
  \end{cases}
\end{equation}
where $\triangle$ denotes symmetric difference.

In the nontrivial latter case, the three cocircuits $D_b(B),D_c(B)$, and $D_c(B')$ form a modular triple. Indeed, their corresponding classes are $\alpha_b,\alpha_c$, and $\alpha_b+\alpha_c$. Their union avoids every basis element in $B\setminus\{b,c\}$, so the complement has rank at least $g-2$. On the other hand, the two independent classes $\alpha_b,\alpha_c$ vanish on that complement, so its rank is at most $g-2$.

\section{Cocircuits and Voronoi-relevant vectors}\label{sec:cocircuits}

It is an elementary observation that the vonorm function is
subadditive, i.e. $$\mu(\alpha + \beta) \leq \mu(\alpha) + \mu(\beta).$$ We see this by choosing minimizers $x\in\alpha$ and $y\in\beta$.  Both $x+y$ and $x-y$ lie in the parity class $\alpha+\beta$.  The parallelogram identity gives
$Q[x+y]+Q[x-y]=2Q[x]+2Q[y]$. At least one of $Q[x+y],Q[x-y]$ is at most $Q[x]+Q[y]$.

With the assumption that conorms are nonnegative, we are able to exactly compute the defect $\mu(\alpha) + \mu(\beta) - \mu(\alpha + \beta)$.

\begin{lemma}\label{lem:defect}
For all $\alpha,\beta\in V$,
\begin{equation*}
  \mu(\alpha)+\mu(\beta)-\mu(\alpha+\beta)
  =2\sum_{e\in D(\alpha)\cap D(\beta)}\lambda_e.
\end{equation*}
In particular, since $\lambda_e>0$ for $e\in E$, equality in subadditivity occurs exactly when $D(\alpha)\cap D(\beta)=\emptyset$.
\end{lemma}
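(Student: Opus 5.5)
The plan is to compute the defect directly from the Fourier-inversion formula \eqref{eq:mu-D}, which expresses every vonorm as a sum of positive conorms over a cocycle support. The proof then reduces to one identity about supports. Since each $e\in E$ is linear over $\Ftwo$, we have $e(\alpha+\beta)=e(\alpha)+e(\beta)$. Hence $e\in D(\alpha+\beta)$ exactly when exactly one of $e(\alpha),e(\beta)$ equals $1$, which gives
\[
  D(\alpha+\beta)=D(\alpha)\triangle D(\beta).
\]
Equivalently, the evaluation map $\iota$ is additive, and supports of sums in $\Ftwo^E$ are symmetric differences.

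Next, apply \eqref{eq:mu-D} to each of the three classes $\alpha$, $\beta$, $\alpha+\beta$. Split the sums over $D(\alpha)$ and $D(\beta)$ according to the partition into $D(\alpha)\setminus D(\beta)$, $D(\beta)\setminus D(\alpha)$, and $D(\alpha)\cap D(\beta)$. The sum over $D(\alpha)\triangle D(\beta)$ is the sum over the first two pieces. Subtracting it from $\mu(\alpha)+\mu(\beta)$ leaves each element of the intersection counted twice, which gives
\[
  \mu(\alpha)+\mu(\beta)-\mu(\alpha+\beta)=2\sum_{e\in D(\alpha)\cap D(\beta)}\lambda_e .
\]
This is the displayed identity. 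It holds for all $\alpha,\beta$, including when some class is $0$, since then the corresponding support is empty.

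For the ``in particular'' clause, note that every $e\in E$ has $\lambda_e>0$ by the definition of $E$. The right-hand side is therefore a sum of strictly positive terms. It vanishes precisely when the index set $D(\alpha)\cap D(\beta)$ is empty, so equality in subadditivity holds exactly in that case.

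There is no real obstacle here. The only point needing care is that \eqref{eq:mu-D} sums over $D(\alpha)\subseteq E$ rather than over all nonzero $e$. This restriction is legitimate because the conorms off $E$ are zero under the standing hypothesis $\lambda_e\ge 0$. That same hypothesis, together with the positivity on $E$, is what makes the equality characterization valid.
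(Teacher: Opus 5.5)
Your proof is correct and is essentially the paper's argument. The paper sums the pointwise identity $\mathbf 1_{e(\alpha)=1}+\mathbf 1_{e(\beta)=1}-\mathbf 1_{e(\alpha+\beta)=1}=2\,\mathbf 1_{e(\alpha)=e(\beta)=1}$ against $\lambda_e$ using \eqref{eq:mu-D}; this is your observation $D(\alpha+\beta)=D(\alpha)\triangle D(\beta)$ followed by the partition count.
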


\begin{proof}
For each $e$, the identity
\[
  \mathbf 1_{e(\alpha)=1}+\mathbf 1_{e(\beta)=1}
  -\mathbf 1_{e(\alpha+\beta)=1}
  =2\mathbf 1_{e(\alpha)=e(\beta)=1}
\]
and \eqref{eq:mu-D} give the result.
\end{proof}

Recall the following definition.
\begin{definition}\label{def:vcell}
The \emph{Voronoi cell} of $(L,Q)$ at the origin is
\[
  \Vor(L,Q)=\{x\in L_{\RR}:Q[x]\le Q[x-v]\text{ for every }v\in L\}.
\]
Equivalently,
\[
  \Vor(L,Q)=
  \bigcap_{0\ne v\in L}
  \{x:2\angles{x}{v}\le Q[v]\}.
\]
A nonzero $v\in L$ is a \emph{Voronoi-relevant vector} if
\[
  2\angles{x}{v}=Q[v]
\]
is a facet-defining hyperplane of the Voronoi cell.
\end{definition}
 
 We also recall the following classical lemma from Conway--Sloane.

\begin{lemma}\cite[Theorem 2]{CS}\label{lem:voronoi-parity}
A nonzero vector $v\in L$ is a  Voronoi-relevant vector if and only if $\pm v$ are the only shortest vectors in the parity class $v+2L$.
\end{lemma}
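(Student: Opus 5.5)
We prove the two implications separately. Both use the standard facts that the Voronoi cell is the intersection of the half-spaces $\{x:2\langle x,v\rangle_Q\le Q[v]\}$ over $0\ne v\in L$, and that $v$ is relevant exactly when its half-space cannot be dropped from this intersection.

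\emph{Relevant implies unique antipodal pair.} Suppose $v$ is Voronoi-relevant but some $w\in v+2L$ with $w\ne\pm v$ satisfies $Q[w]\le Q[v]$. Set
\[
  y=\tfrac{v+w}{2},\qquad z=\tfrac{v-w}{2}.
\]
Both lie in $L$ because $v\equiv w \pmod{2L}$, and both are nonzero because $w\ne\pm v$. We have $v=y+z$. By the parallelogram law,
\[
  Q[y]+Q[z]=\tfrac12\bigl(Q[v]+Q[w]\bigr)\le Q[v]=Q[y]+Q[z]+2\langle y,z\rangle_Q,
\]
so $\langle y,z\rangle_Q\ge 0$. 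Now let $x$ satisfy $2\langle x,y\rangle_Q\le Q[y]$ and $2\langle x,z\rangle_Q\le Q[z]$. Adding these gives
\[
  2\langle x,v\rangle_Q\le Q[y]+Q[z]\le Q[v].
\]
So the half-space for $v$ contains the intersection of the half-spaces for $y$ and $z$. It is therefore redundant, unless the facet hyperplane of $v$ can still meet $\Vor(L,Q)$ in a face of dimension $g-1$.

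To rule this out, take $x$ in the relative interior of the supposed facet, so $2\langle x,v\rangle_Q=Q[v]$. Equality must then hold in both added inequalities, so $x$ lies on the hyperplanes of $y$ and $z$ as well. An open subset of the facet hyperplane is thus contained in the hyperplanes of $y$ and $z$. This forces $y$ and $z$ to be proportional to $v$, hence $y=z=v/2$. But then $v/2\in L$ and $w=y-z=0$, contradicting $w\ne\pm v$ (since $v\ne0$).

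\emph{Unique antipodal pair implies relevant.} Suppose $\pm v$ are the unique minimizers of $Q$ in the class $v+2L$. Consider the point $x=v/2$. For any $u\in L$, the vector $2u-v$ lies in $v+2L$, so
\[
  Q[x-u]=\tfrac14 Q[v-2u]\ge \tfrac14 Q[v]=Q[x],
\]
with equality exactly when $v-2u=\pm v$, that is, when $u\in\{0,v\}$. Hence $v/2$ lies in $\Vor(L,Q)$. Its only nearest lattice points are $0$ and $v$, and every other constraint is strict at $v/2$.

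By continuity, a small neighborhood of $v/2$ inside the hyperplane $2\langle x,v\rangle_Q=Q[v]$ still satisfies all other constraints strictly. Only finitely many $u$ matter locally, since $Q[x-u]\to\infty$ uniformly on bounded sets. This neighborhood is therefore a $(g-1)$-dimensional piece of the boundary of the cell, so the hyperplane is facet-defining and $v$ is Voronoi-relevant.

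The delicate step is the redundancy-to-non-facet argument in the first direction, in particular the degenerate cases $\langle y,z\rangle_Q=0$ and $Q[w]=Q[v]$. There I rely on the equality analysis above rather than on strict inequalities.
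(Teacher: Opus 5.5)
Your second direction (unique antipodal pair $\Rightarrow$ relevant) is the paper's argument. It uses the witness $v/2$, the identity $Q[v/2-u]=\frac14Q[v-2u]$, and the fact that only the constraints for $u\in\{0,v\}$ are tight there. You also spell out the local-finiteness step that the paper leaves implicit.

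Your first direction takes a different route. The paper reads both directions off the same point: $v/2$ lies in the relative interior of the $v$-facet iff all other constraints are strict there, iff $\pm v$ are the only minimizers in $v+2L$. For relevant $\Rightarrow$ unique, this tacitly uses that every facet $\Vor(L,Q)\cap(\Vor(L,Q)+v)$ contains $v/2$ in its relative interior. That holds because the facet is invariant under $x\mapsto v-x$, since $\Vor(L,Q)=-\Vor(L,Q)$. You avoid this symmetry fact and instead produce an explicit domination. The halving $y=(v+w)/2$, $z=(v-w)/2$ (the same trick the paper uses in \cref{prop:cocircuit-shortest}) gives $v=y+z$ with $\langle y,z\rangle_Q\ge 0$, so the $v$-inequality is implied by those for $y$ and $z$. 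An equality analysis then rules out a $(g-1)$-dimensional face. This is more self-contained and links the Voronoi condition directly to a decomposition $v=y+z$, at the cost of a degenerate-case analysis.

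That analysis, however, ends incorrectly as written. From the coincidence of hyperplanes you may conclude that $y$ and $z$ are proportional to $v$, but not that $y=z=v/2$. Moreover, $w=y-z=0$ would not contradict $w\neq\pm v$, since $v\neq 0$. The correct finish uses what your equality analysis already yields: $Q[y]+Q[z]=Q[v]$, i.e.\ $\langle y,z\rangle_Q=0$. Writing $y=cv$ and $z=(1-c)v$ gives $c(1-c)Q[v]=0$, so $y=0$ or $z=0$, i.e.\ $w=\pm v$, which is the desired contradiction. Equivalently, for $y=cv$ with $c\neq 0$, the affine hyperplane $\{2\langle x,y\rangle_Q=Q[y]\}$ equals $\{2\langle x,v\rangle_Q=Q[v]\}$ only if $c=1$, which forces $z=0$. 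With this one-line repair your proof is complete.
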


\begin{proof}
The midpoint $v/2$ is equidistant from $0$ and $v$.  For $w\in L$,
\[
  Q\left[\frac v2-w\right]=\frac14Q[v-2w].
\]
Thus $v/2$ lies in $\Vor(L,Q)$ exactly when $v$ is shortest in $v+2L$.  It lies in the relative interior of the facet separating the cells centered at $0$ and $v$ exactly when all other inequalities are strict. Equality with another point $w\notin\{0,v\}$ is equivalent to a vector $v-2w\ne\pm v$ of the same length in $v+2L$.
\end{proof}

For graph flow lattices \cite{amini2010lattice, bacher1997lattice} and, more generally, zonotopal \cite{vallentin2004note} or regular-matroid lattices \cite{dancsolim}, descriptions of the Voronoi-relevant vectors are known. In the next proposition we obtain a description directly from nonnegative conorms, before establishing regularity of the support matroid.

\begin{proposition}\label{prop:cocircuit-shortest}
Let $0\ne\gamma\in V$.  The following are equivalent.
\begin{enumerate}
\item $D(\gamma)$ is a cocircuit of $M$.
\item Every decomposition $\gamma=\alpha+\beta$ with $\alpha,\beta\ne0$ satisfies
\[
  \mu(\gamma)<\mu(\alpha)+\mu(\beta).
\]
\item The shortest vectors in the parity class $\gamma$ are exactly one antipodal pair $\{\pm r_\gamma\}$.
\item $r_\gamma$ is a Voronoi-relevant vector.
\end{enumerate}
\end{proposition}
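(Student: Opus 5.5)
We will prove the cycle (1)$\Leftrightarrow$(2), then (2)$\Leftrightarrow$(3), and finally (3)$\Leftrightarrow$(4). The last of these is exactly \cref{lem:voronoi-parity}, applied to a shortest vector $r_\gamma$ of the class $\gamma$, so no further work is needed there.

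For (1)$\Leftrightarrow$(2), we use \cref{lem:defect}. Condition (2) fails exactly when there is a decomposition $\gamma=\alpha+\beta$ with $\alpha,\beta\neq 0$ and $D(\alpha)\cap D(\beta)=\emptyset$. In that situation $D(\gamma)=D(\alpha)\triangle D(\beta)=D(\alpha)\sqcup D(\beta)$. Since $\iota$ is injective, both $D(\alpha)$ and $D(\beta)$ are nonempty, so each is a nonempty support of a nonzero cocycle properly contained in $D(\gamma)$. Hence $D(\gamma)$ is not a cocircuit.

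Conversely, suppose $D(\gamma)$ is not a cocircuit. Then some nonzero cocycle $\iota(\alpha)$ has $D(\alpha)\subsetneq D(\gamma)$. Set $\beta=\gamma+\alpha$. Then $D(\beta)=D(\gamma)\setminus D(\alpha)$, which is nonempty, so $\beta\neq0$, and $D(\beta)$ is disjoint from $D(\alpha)$. By \cref{lem:defect} this gives equality in subadditivity, so (2) fails.

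For (2)$\Leftrightarrow$(3), we use the parallelogram argument that gave subadditivity.

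\emph{(3)$\Rightarrow$(2).} Suppose (2) fails, say $\mu(\gamma)=\mu(\alpha)+\mu(\beta)$ with $\alpha,\beta\ne0$ and $\alpha+\beta=\gamma$. Pick minimizers $x\in\alpha$ and $y\in\beta$. Both $x\pm y$ lie in $\gamma$, and by the parallelogram identity
\[
Q[x+y]+Q[x-y]=2\mu(\gamma).
\]
Each term is at least $\mu(\gamma)$, so both equal $\mu(\gamma)$, and $x+y$, $x-y$ are both shortest in $\gamma$. They are distinct because $y\neq0$. They are not negatives of each other because $x\ne 0$, which holds since $\alpha\neq0$. So the class $\gamma$ has at least two antipodal pairs of shortest vectors, and (3) fails.

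\emph{(2)$\Rightarrow$(3).} Suppose $u\neq\pm v$ are both shortest in $\gamma$. Put $x=(u+v)/2$ and $y=(u-v)/2$. Both lie in $L$, since $u\equiv v \pmod{2L}$, and both are nonzero. Moreover $x+y=u$, so the classes $\alpha=[x]$ and $\beta=[y]$ satisfy $\alpha+\beta=\gamma$. The parallelogram identity gives
\[
Q[x]+Q[y]=\tfrac12\bigl(Q[u]+Q[v]\bigr)=\mu(\gamma).
\]
Since $\mu(\alpha)\le Q[x]$ and $\mu(\beta)\le Q[y]$, this yields $\mu(\alpha)+\mu(\beta)\le\mu(\gamma)$, and subadditivity turns this into equality.

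The step needing care is that $\alpha$ and $\beta$ are nonzero classes, that is, $x,y\notin 2L$. If $x\in 2L$, then $\alpha=0$ and $\beta=\gamma$. The displayed equality then reads $Q[x]+Q[y]=\mu(\gamma)\le Q[y]$, which forces $Q[x]=0$, hence $x=0$. This contradicts $u\ne -v$. The case $y\in2L$ is symmetric and contradicts $u\neq v$. So in fact we get $\mu(\alpha)=Q[x]>0$ and $\mu(\beta)=Q[y]>0$, and (2) fails. This completes the equivalences.
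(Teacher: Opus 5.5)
Your proof is correct and follows the paper's argument step for step: \cref{lem:defect} for (1)$\Leftrightarrow$(2), the parallelogram identity applied to $(u\pm v)/2$ and to $x\pm y$ for (2)$\Leftrightarrow$(3), and \cref{lem:voronoi-parity} for (3)$\Leftrightarrow$(4). The only small difference is that you rule out $\alpha=0$ or $\beta=0$ by combining the displayed identity with $Q[y]\ge\mu(\gamma)$ rather than by strict convexity, and you explicitly check that $x+y$ and $x-y$ are not antipodal, which the paper leaves implicit.
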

\begin{proof}
    We will prove equivalence between every consecutive pair of statements. For the first pair, suppose there is a decomposition $\gamma=\alpha+\beta$ with  $\mu(\gamma)=\mu(\alpha)+\mu(\beta)$. Then by \cref{lem:defect} $D(\alpha) \cap D(\beta) = \emptyset$. This implies that $D(\gamma)$ is not support-minimal as $D(\alpha)$ is a proper nonempty subset. Conversely, if $D(\gamma)$ is not support-minimal, it contains some nonzero $\alpha \in V$ with $D(\alpha) \subset D(\gamma)$. Set $\beta=\gamma+\alpha$, and observe that $D(\alpha)$ and $D(\beta)$ are disjoint, nonempty, and union to $D(\gamma)$, so equality holds in subadditivity by \cref{lem:defect}.

    For the second pair, suppose $x \neq \pm y$ are shortest vectors in the class $\gamma$. Then $x-y, x+y \in 2L$. Let $\alpha = \left[ \frac{x-y}{2}\right]$ and $\beta = \left[ \frac{x+y}{2}\right]$. 
   Then $\alpha + \beta = \gamma$ and we note that $\alpha,\beta\neq0$. If $\alpha=0$, then
$(x-y)/2\in2L$, so $(x+y)/2$ is another representative of
$\gamma$. Since $x\neq y$, strict convexity gives
\[
    Q\left[\frac{x+y}{2}\right]
    <\frac{Q[x]+Q[y]}{2}
    =\mu(\gamma),
\]
a contradiction. Similarly, we can conclude $\beta \neq 0$.
Then $$Q\left[ \frac{x-y}{2}\right] + Q\left[ \frac{x+y}{2}\right] = \frac{1}{2}(Q[x] + Q[y]) = \mu(\gamma).$$
    We know $\mu(\gamma) \leq \mu(\alpha) + \mu(\beta)$ but $\mu(\alpha) \leq Q\left[ \frac{x-y}{2}\right]$ and $\mu(\beta) \leq Q\left[ \frac{x+y}{2}\right]$, so $\mu(\gamma) = \mu(\alpha) + \mu(\beta)$.

    Conversely, suppose $\mu(\gamma) = \mu(\alpha) + \mu(\beta)$ where $\alpha,\beta\neq0$, and choose minimizers $a \in \alpha, b \in \beta$. Let $x = a+b, y = a-b$. Then $Q[x] + Q [y] = 2\mu(\alpha) + 2 \mu(\beta) = 2\mu(\gamma)$. Each of $Q[x], Q[y]$ must be at least $\mu(\gamma)$, so each is equal. Then there are two non-antipodal shortest vectors.

    Finally, for the equivalence between the last pair, this is simply \cref{lem:voronoi-parity}.

\end{proof}

We have associated to every cocircuit $D$ a shortest
lattice lift $r_D$, uniquely determined up to sign. Next we show that these lifts collectively see the entire integral lattice, and not just its reduction modulo $2$. For a cocircuit $D$, denote by $\alpha_D$ its unique class in $V$ and choose one of its two shortest representatives:
\begin{equation}\label{eq:rD}
  r_D\in L,
  \qquad [r_D]=\alpha_D,
  \qquad Q[r_D]=\mu(\alpha_D).
\end{equation}
The choice is only up to sign.  All later constructions are sign-independent or explicitly allow signs.

The following lemma is standard but useful. 

\begin{lemma}\label{lem:relevant-generate}
The Voronoi-relevant vectors generate $L$ as an abelian group.  Consequently, the vectors $r_D$, as $D$ ranges over the cocircuits of $M$, generate $L$.
\end{lemma}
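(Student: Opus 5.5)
The plan is to prove the classical statement first and then deduce the consequence from \cref{prop:cocircuit-shortest}. Let $\Lambda\subseteq L$ be the subgroup generated by the Voronoi-relevant vectors; we want $\Lambda=L$. The standard route uses the fact that the Voronoi cell $\Vor(L,Q)$ is a bounded polytope whose facets are exactly the hyperplanes $2\angles{x}{v}=Q[v]$ for relevant $v$. Together with the facet description, this gives
\[
\Vor(L,Q)=\bigcap_{v\ \text{relevant}}\{x:2\angles{x}{v}\le Q[v]\}.
\]
In other words, every other inequality in \cref{def:vcell} is redundant.

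Given $w\in L$, we argue by induction on $Q[w]$ (the set of lattice points below any bound is finite) that $w\in\Lambda$. If $w=0$ there is nothing to prove. Otherwise $w\notin\Vor(L,Q)$ is not quite the right test, so instead consider $w/2$. If $w/2\in\Vor(L,Q)$ lies on the boundary, then $w$ is shortest in its parity class $w+2L$.

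A cleaner alternative is to walk along the segment. Consider the point $x=w$ itself: since $Q[w]>Q[w-w]=0$, the point $w$ lies outside $\Vor(L,Q)$. By the irredundant description, some relevant $v$ has $2\angles{w}{v}>Q[v]$. Then
\[
Q[w-v]=Q[w]-2\angles{w}{v}+Q[v]<Q[w].
\]
By induction $w-v\in\Lambda$, hence $w\in\Lambda$. This is the entire argument, and it avoids parity considerations altogether.

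The step that carries weight is the irredundancy claim: a bounded polytope given as an intersection of finitely many relevant half-spaces (only finitely many $v$ matter, by local finiteness of $L$ and boundedness) equals the intersection of its facet-defining half-spaces. This is standard polyhedral theory (Minkowski–Weyl, or the fact that a full-dimensional polytope is the intersection of its facet half-spaces). We will record first that only finitely many inequalities are needed: for $Q[v]$ large, the half-space $\{x:2\angles{x}{v}\le Q[v]\}$ contains the ball of radius $\sqrt{Q[v]}/2$, which eventually contains the cell cut out by a basis and its negatives. Full-dimensionality holds since the cell contains a neighborhood of $0$.

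For the consequence, \cref{prop:cocircuit-shortest} (the equivalence of (1), (3), (4)) shows that every Voronoi-relevant vector $v$ has a class $\gamma=[v]$ whose shortest vectors form the single antipodal pair $\pm v$. Hence $D(\gamma)$ is a cocircuit and $v=\pm r_{D(\gamma)}$. So the $r_D$ include all relevant vectors up to sign, and therefore generate $L$. The main obstacle is only the careful justification of the irredundant facet description; everything else is a one-line descent.
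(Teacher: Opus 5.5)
Your proof is correct, but it takes a different route from the paper.

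\textbf{The paper's route.} It uses the facet-to-facet tiling of $L_{\RR}$ by the translates $\Vor(L,Q)+z$. It chooses a path from the interior of the cell at $0$ to the interior of the cell at $z$ that crosses only facets. Then $z$ is the telescoping sum of the relevant vectors separating consecutive cells.

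\textbf{Your route.} You run a norm descent instead. A nonzero $w\in L$ violates the inequality indexed by $w$ itself, since $2Q[w]\le Q[w]$ fails. So $w$ lies outside the Voronoi cell, and the facet description gives a relevant $v$ with $2\langle w,v\rangle_Q>Q[v]$. Hence $Q[w-v]<Q[w]$, and well-founded induction over the finitely many lattice vectors of smaller norm finishes.

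\textbf{Comparison.} Your argument needs only two inputs. The cell must be a full-dimensional polytope cut out by finitely many of the inequalities, which you justify with the Cauchy--Schwarz and bounded-parallelotope estimate. And such a polytope must be the intersection of its facet half-spaces. It avoids the face-to-face property of the tiling and the genericity claim that a path can miss the codimension-two skeleton. The paper invokes both of these without proof. Your descent also gives an explicit norm-decreasing reduction procedure. The paper's argument is shorter and more pictorial, mirroring the walk through cells in the graph-flow model.

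You also spell out the deduction of the second sentence, which the paper leaves implicit. A relevant $v$ has nonzero class whose only shortest vectors are $\pm v$ by \cref{lem:voronoi-parity}. So $D([v])$ is a cocircuit by \cref{prop:cocircuit-shortest}, and $v=\pm r_{D([v])}$.

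The abandoned detour through $w/2$ in your second paragraph should simply be deleted.
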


\begin{proof}
The translates $\Vor(L,Q)+z$, $z\in L$, form a facet-to-facet tiling of $L_{\RR}$.  Given $z\in L$, choose a path from the interior of $\Vor(L,Q)$ to the interior of $\Vor(L,Q)+z$ that crosses only facets. The centers of adjacent cells differ by a Voronoi-relevant vector.  Summing these differences gives $z$.
\end{proof}

\begin{remark}
    The statement of \cref{lem:relevant-generate} is stronger than saying that the cocircuit classes span $L/2L$.  Spanning modulo $2$ would allow a nontrivial odd-index sublattice.  Eliminating that odd-index possibility is essential in \cref{sec:modulartriples}.
\end{remark}

In the next section we determine the local relations among these lifts and use them to show that the fundamental cocircuit lifts associated to any matroid basis form a $\ZZ$-basis of $L$.

\section{Modular triples and integral cocircuit bases}\label{sec:modulartriples}

The next theorem is the heart of the proof of \cref{thm:main}. It shows that shortest cocircuit lifts satisfy the modular-triple relations which are characteristic of regular chain-group representations.

\begin{theorem}\label{thm:metric-modular-triple}
Let $D_1,D_2,D_3$ be a modular triple of cocircuits.  Let $\alpha_i=\alpha_{D_i}\in V$ and $r_i=r_{D_i}\in L.$ Then there are signs $\eps_i\in\{\pm1\}$ such that
\begin{equation*}
  \eps_1r_1+\eps_2r_2+\eps_3r_3=0.
\end{equation*}
\end{theorem}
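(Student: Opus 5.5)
The first step is purely combinatorial. Since $D_1,D_2,D_3$ are distinct cocircuits whose union $U$ has complement $S$, all three classes $\alpha_i$ lie in the $2$-dimensional space $K_S$. So they are exactly its three nonzero elements, and $\alpha_3=\alpha_1+\alpha_2$. Every $e\in U$ restricts to a nonzero functional on $K_S$, so it vanishes on exactly one of $\alpha_1,\alpha_2,\alpha_3$. Hence $U$ is partitioned into
\[
  A_i=U\setminus D_i,\qquad D_i=A_j\sqcup A_k\quad(\{i,j,k\}=\{1,2,3\}).
\]
Each $A_i$ is nonempty, since otherwise one $D_j$ would properly contain another or two would coincide, contradicting minimality. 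Put $p_i=\sum_{e\in A_i}\lambda_e>0$. Then \eqref{eq:mu-D} gives $\mu(\alpha_i)=p_j+p_k$, and \cref{lem:defect} gives
\[
  \mu(\alpha_1)+\mu(\alpha_2)-\mu(\alpha_3)=2p_3 ,
\]
together with its cyclic analogues.

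The second step reduces the theorem to an inner-product identity. Both $r_1+r_2$ and $r_1-r_2$ lie in the class $\alpha_3$, and
\[
  Q[r_1\pm r_2]=\mu(\alpha_1)+\mu(\alpha_2)\pm2\langle r_1,r_2\rangle_Q .
\]
Both values are at least $\mu(\alpha_3)$, so $|\langle r_1,r_2\rangle_Q|\le p_3$ automatically. If instead $|\langle r_1,r_2\rangle_Q|=p_3$, then for the appropriate sign $\eps$ we have $Q[r_1+\eps r_2]=\mu(\alpha_3)$. Then \cref{prop:cocircuit-shortest}(3), uniqueness of the shortest antipodal pair in a cocircuit class, forces $r_1+\eps r_2=\pm r_3$, which is the desired relation. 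So everything reduces to proving the reverse inequality $|\langle r_1,r_2\rangle_Q|\ge p_3$. By symmetry it is equivalent to prove any one of the three cyclic versions.

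The third step is the main obstacle. I would argue by contradiction. Let $x=r_1+\eps r_2$ be the shorter of $r_1\pm r_2$, and suppose $Q[x]>\mu(\alpha_3)$. Choose the sign of $r_3$ so that $w=(x-r_3)/2\in L$; this is possible since $x\equiv r_3 \pmod{2L}$. Set $w'=(x+r_3)/2=x-w$. The classes $\beta=[w]$ and $\beta'=[w']$ satisfy $\beta+\beta'=\alpha_3$, and the parallelogram law gives
\[
  Q[w]+Q[w']=\tfrac12\bigl(Q[x]+\mu(\alpha_3)\bigr)<Q[x].
\]
Comparing this with $\mu(\beta)+\mu(\beta')=\mu(\alpha_3)+2\sum_{e\in D(\beta)\cap D(\beta')}\lambda_e$ from \cref{lem:defect} bounds how $D(\beta)$ and $D(\beta')$ can overlap. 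I would then run the same parallelogram argument on the pairs $(r_1,w)$, $(r_1,w')$, $(r_2,w)$ and $(r_2,w')$. The aim is to produce, unless equality holds throughout, either a representative of $\alpha_1$ or $\alpha_2$ strictly shorter than $r_1$ or $r_2$, or a second non-antipodal shortest vector in a cocircuit class. Either contradicts \cref{prop:cocircuit-shortest}.

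For the bookkeeping in this third step I would compute each $\mu$ via \eqref{eq:mu-D} in terms of the partition $A_1,A_2,A_3,S$, so that every defect becomes an explicit sum of $p_i$'s and $\lambda$'s on $S$. The delicate part is controlling how $D(\beta)$ meets $S$, since $w$ need not lie in the span of $r_1$ and $r_2$. If this direct approach stalls, my fallback is to pass to the rank-$2$ sublattice $\ZZ r_1+\ZZ r_2$ and compare its vonorms with those of $L$ on $K_S$, using the nonnegativity hypothesis to compute the restricted conorms in terms of $p_1,p_2,p_3$.
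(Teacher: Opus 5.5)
Your Steps 1 and 2 are correct. The partition $U=A_1\sqcup A_2\sqcup A_3$, the formulas $\mu(\alpha_i)=p_j+p_k$, and the reduction to the reverse inequality $|\langle r_1,r_2\rangle_Q|\ge p_3$ are all fine. (No sign choice for $r_3$ is needed, since both $(x\pm r_3)/2$ lie in $L$.)

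The gap is that Step 3, which carries the entire content of the theorem, is only a plan, and the route you sketch does not produce a contradiction. Take $\varepsilon=1$, so $\langle r_1,r_2\rangle_Q\le 0$, and let $t=[w']$, so that $[w]=t+\alpha_3$. The parallelogram law for $w,w'$ together with \cref{lem:defect} yields only
\[
  p_3+\langle r_1,r_2\rangle_Q\ \ge\ 2\sum_{e\in S,\,e(t)=1}\lambda_e+2\sum_{e\in A_3,\,e(t)=1}\lambda_e .
\]
This is entirely compatible with your contradiction hypothesis $p_3+\langle r_1,r_2\rangle_Q>0$. Pairing $r_i$ with $w$ or $w'$ introduces vectors such as $r_1+w'=\tfrac12(3r_1+r_2+r_3)$, about which nothing useful is known. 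The fallback to $\ZZ r_1+\ZZ r_2$ is circular: the conorm hypothesis concerns $L$, and whether $\mu(\alpha_3)$ is attained inside that sublattice is exactly what must be shown.

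The missing ingredient is the complementary pair
\[
  w_1=\tfrac12(r_1-r_2-r_3),\qquad w_2=\tfrac12(-r_1+r_2-r_3).
\]
These lie in the classes $t+\alpha_1$ and $t+\alpha_2$, with $w_1+w_2=-r_3$ and $w_1-w_2=r_1-r_2$, the \emph{longer} of $r_1\pm r_2$. The same computation gives
\[
  p_3-\langle r_1,r_2\rangle_Q\ \ge\ 2\sum_{e\in S,\,e(t)=1}\lambda_e+2\sum_{e\in A_3,\,e(t)=0}\lambda_e .
\]
Add the two inequalities. The inner products cancel, and the $A_3$-terms combine to $2p_3$, which cancels the left side. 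This leaves $0\ge 4\sum_{e\in S,\,e(t)=1}\lambda_e$.

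Positivity of $\lambda_e$ on $S$ is exactly what resolves your ``delicate part''. It forces $t\in K_S$ and equality everywhere, so all four half-sums $\tfrac12(\pm r_1\pm r_2\pm r_3)$ are shortest in their classes. Since $t+K_S=K_S\ni 0$, one of them vanishes, which is the desired relation; with your sign convention it gives $\langle r_1,r_2\rangle_Q=-p_3$.

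This is the paper's proof. There it is phrased as the Hadamard identity $\sum_i Q[w_i]=Q[r_1]+Q[r_2]+Q[r_3]=2\sum_{e\in U}\lambda_e$ for the four half-sums, compared with the coset vonorm sum $\sum_{\alpha\in t+K_S}\mu(\alpha)=2\sum_{e\in U}\lambda_e+4\sum_{e\in S,\,e(t)=1}\lambda_e$.
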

\begin{proof}
     Let $U = D_1 \cup D_2 \cup D_3$ and $S = E \setminus U$. Then the rank $\rk(M/S) = 2$, and so $\dim K_S = \dim\{\alpha \in V: e(\alpha) = 0$ for all  $e \in S\} = 2$. Then $K_S$ has four elements, three of which are nonzero. Observe that for each $\alpha_i$, we have $e(\alpha_i) = 0$ for each $e \in S$, so $\alpha_i \in K_S$. Since they are distinct, these must be exactly the three nonzero elements of $K_S$, and thus $\alpha_1 + \alpha_2 + \alpha_3 = 0$. For each $e \in U$, we have $e(\alpha_1) + e(\alpha_2) + e(\alpha_3) = 0$, and at least one of the summands must be nonzero. Therefore, it must be the case that exactly two of the $e(\alpha_i)$ are equal to $1$. So, each $e \in U$ belongs to exactly two of $D_1, D_2, D_3$.

    Now consider 
    \begin{equation*}
\begin{aligned}
  w_0&=\frac{r_1+r_2+r_3}{2},&
  w_1&=\frac{r_1-r_2-r_3}{2},\\
  w_2&=\frac{-r_1+r_2-r_3}{2},&
  w_3&=\frac{-r_1-r_2+r_3}{2}.
\end{aligned}
\end{equation*}
They lie in $L$ since the $\alpha_i$ sum to $0$: we have $[r_1] + [r_2]+[r_3] = 0$, so $r_1+r_2+r_3 \in 2L$, and similar for all other sign patterns. If $t=[w_0]\in V$, then their four parity classes are
\[
  t+K_S=\{t,t+\alpha_1,t+\alpha_2,t+\alpha_3\}.
\]

The four sign vectors are the rows of a Hadamard matrix with orthogonal columns. This gives 
\begin{equation*}
  \sum_{i=0}^3Q[w_i]
  =Q[r_1]+Q[r_2]+Q[r_3].
\end{equation*}
Since each $r_i$ is a shortest vector in its class and every $e\in U$ occurs in exactly two $D_i$, the sum is
\begin{equation*}
  Q[r_1]+Q[r_2]+Q[r_3]
  =\mu(\alpha_1)+\mu(\alpha_2)+\mu(\alpha_3)
  =2\sum_{e\in U}\lambda_e.
\end{equation*}

Now consider $\sum_{\alpha \in t+K_S}\mu(\alpha)$, which we can compute using \eqref{eq:mu-D}. If $e \in U$ then $e|_{K_S} \neq 0$, so $e = 1$ at exactly two points in $t+K_S$. If $e \in E\setminus U = S$ then $e|_{K_S} = 0$, so $e$ is constant on $t + K_S$. Thus,
$$\sum_{\alpha \in t+K_S}\mu(\alpha) = 2\sum_{e \in U} \la_e + 4 \sum_{\substack{e\in S\\e(t)=1}}\lambda_e.$$

Furthermore, $\sum_{i=0}^3Q[w_i] \geq \sum_{\alpha \in t+K_S} \mu(\alpha)$, so $$2\sum_{e \in U} \la_e \geq 2\sum_{e \in U} \la_e + 4 \sum_{\substack{e\in S\\e(t)=1}}\lambda_e.$$

Since all $\la_e$ for $e \in E$ are strictly positive, we conclude that $e(t) = 0$ for all $e \in S$, so $t+K_S = K_S$. This means one of $w_i$ is in the class $[0]$. Equality holds throughout the preceding inequalities, so each $w_i$ necessarily attains the minimum in its parity class. One of those classes is zero, so one of the $w_i$ must be equal to $0$. Therefore, the sign relation associated to that $w_i$ holds.
\end{proof}

\begin{remark}
For a cographic matroid, a modular triple of cocircuits is the matroidal shadow of a theta configuration in a graph.  The signed relation of \cref{thm:metric-modular-triple} is then the familiar signed relation among its three cycle flows.  The theorem says that conorm positivity forces arbitrary shortest cocircuit lifts to obey the same local flow law.
\end{remark}

Recent work of M\'enab\'e \cite{menabe} studies signed three-term dependencies among Voronoi-relevant vectors under the name ``line relations," and states that these relations generate the full lattice of dependencies among the relevant vectors. In the nonnegative-conorm setting considered here, \cref{thm:metric-modular-triple} shows that every modular triple of cocircuits gives such a line relation. Modular triples form a distinguished subclass of line relations.

In the remainder of this section, we will show that fundamental cocircuit lifts form a $\ZZ$-basis of $L$. For a basis $B$ of $M$, define
\begin{equation*}
  K_B=\left\langle r_{D_b(B)}:b\in B\right\rangle_{\ZZ}\subseteq L.
\end{equation*}
The classes $\alpha_b$ form a basis of $V=L/2L$, so the vectors $r_{D_b(B)}$ are linearly independent and $K_B$ has full rank.  At this point one knows only that $[L:K_B]$ is odd.

We now propagate the modular-triple relations through the basis graph. The following basis-exchange argument is an integral analogue of \cite[Lemma 3.21]{pendavingh2013skew}. 

\begin{proposition}\label{prop:KB-invariant}
If $B$ and $B'$ are bases of $M$, then $K_B=K_{B'}$.
\end{proposition}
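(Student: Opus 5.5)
The plan is to use that the basis graph of a matroid is connected: any two bases are joined by a sequence of single exchanges $B\to B'=B-b+f$ (\cite{oxley}). It therefore suffices to prove $K_B=K_{B'}$ whenever $B'=B-b+f$ is again a basis. Since the roles of $B$ and $B'$ are symmetric ($B=B'-f+b$), it is enough to show $K_{B'}\subseteq K_B$. Equality then follows by applying the same inclusion to the reverse exchange.

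Fix such an exchange and write $\alpha_c$ for the dual basis of $B$. By \eqref{eq:cocircuit-pivot}, $D_f(B')=D_b(B)$. Since the shortest lift of a cocircuit is unique up to sign by \cref{prop:cocircuit-shortest}, we get $r_{D_f(B')}=\pm r_{D_b(B)}\in K_B$. Now take $c\in B\setminus\{b\}$ and use \eqref{eq:cocircuit-pivot-2}.
\begin{itemize}
\item If $f\notin D_c(B)$, then $D_c(B')=D_c(B)$, so $r_{D_c(B')}=\pm r_{D_c(B)}\in K_B$.
\item If $f\in D_c(B)$, then $D_c(B')=D_c(B)\triangle D_b(B)$. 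As noted after \eqref{eq:cocircuit-pivot-2}, the three cocircuits $D_b(B)$, $D_c(B)$, $D_c(B')$ form a modular triple, with classes $\alpha_b$, $\alpha_c$, $\alpha_b+\alpha_c$. They are distinct because the classes are distinct and $\iota$ is injective.
\end{itemize}
In the second case, \cref{thm:metric-modular-triple} gives signs with $\eps_1 r_{D_b(B)}+\eps_2 r_{D_c(B)}+\eps_3 r_{D_c(B')}=0$. Solving, $r_{D_c(B')}=\pm r_{D_b(B)}\pm r_{D_c(B)}\in K_B$. Hence every generator of $K_{B'}$ lies in $K_B$, so $K_{B'}\subseteq K_B$.

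I expect no serious obstacle once \cref{thm:metric-modular-triple} is available. The points needing care are the following.
\begin{itemize}
\item Connectivity of the basis graph. This is standard: by symmetric exchange, the distance $|B\setminus B'|$ can be decreased one step at a time.
\item Sign ambiguity of the $r_D$. This is harmless because $K_B$ is a $\ZZ$-span and so is closed under negation.
\item The modular-triple claim in the case $f\in D_c(B)$. The paper has already verified it after \eqref{eq:cocircuit-pivot-2}.
\end{itemize}

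As a remark, one could alternatively avoid the symmetry argument and show equality directly. The same relation gives $r_{D_c(B)}=\pm r_{D_b(B)}\pm r_{D_c(B')}\in K_{B'}$, since $r_{D_b(B)}=\pm r_{D_f(B')}\in K_{B'}$.
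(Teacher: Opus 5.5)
Your proposal is correct and takes essentially the paper's route: you reduce to a single exchange $B'=B-b+f$ via connectivity of the basis graph, split cases using \eqref{eq:cocircuit-pivot} and \eqref{eq:cocircuit-pivot-2}, and invoke \cref{thm:metric-modular-triple} in the modular-triple case. The only cosmetic difference is that you get equality from an inclusion plus the symmetry of the exchange, while the paper observes directly that the two generating sets differ by signed elementary integral operations, as in your closing remark.
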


\begin{proof}
The basis graph of a matroid is connected, so it is enough to consider a single exchange
\[
  B'=B-b+f.
\]
By \eqref{eq:cocircuit-pivot}, the new fundamental cocircuit at $f$ equals the old one at $b$.  For $c\in B\setminus\{b\}$, either the fundamental cocircuit is unchanged or, by \eqref{eq:cocircuit-pivot-2}, the old and new cocircuits belong to a modular triple.  In the latter case, \cref{thm:metric-modular-triple} gives
\[
  r_{D_c(B')}=\pm r_{D_c(B)}\pm r_{D_b(B)}.
\]
Thus the new generating set is obtained from the old one by signed elementary integral operations, so $K_{B'}=K_B$.
\end{proof}

\begin{theorem}\label{thm:integral-cocircuit-basis}
For every basis $B$ of $M$, the vectors
\[
  \{r_{D_b(B)}:b\in B\}
\]
form a $\ZZ$-basis of $L$.
\end{theorem}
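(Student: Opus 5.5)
The plan is to show that the lattice $K_B$ contains every shortest cocircuit lift $r_D$, and then use \cref{lem:relevant-generate} to conclude $K_B=L$. We already know $K_B$ has full rank and that the $r_{D_b(B)}$ are linearly independent. So once $K_B=L$ is established, the family $\{r_{D_b(B)}:b\in B\}$ is a $\ZZ$-basis of $L$.

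First, let $D$ be any cocircuit of $M$. By \cref{lem:every-cocircuit-fundamental} there is a basis $B'$ and an element $d\in B'$ with $D=D_d(B')$. Hence $r_D=r_{D_d(B')}$ is one of the generators of $K_{B'}$. If the sign choice in \eqref{eq:rD} differs, this only changes $r_D$ by a sign, which does not affect membership in a lattice. By \cref{prop:KB-invariant}, $K_{B'}=K_B$. Therefore $r_D\in K_B$ for every cocircuit $D$.

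Second, by \cref{lem:relevant-generate}, the vectors $r_D$, as $D$ ranges over all cocircuits, generate $L$. Each of them lies in $K_B$, so $L\subseteq K_B\subseteq L$, and hence $K_B=L$. The $g$ generators of $K_B$ are linearly independent over $\RR$, so they form a $\ZZ$-basis of $L$.

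At this stage there is no real obstacle left. The substantive content was already supplied by \cref{thm:metric-modular-triple}, which feeds into \cref{prop:KB-invariant}, together with the generation statement in \cref{lem:relevant-generate}. The one point to check carefully is that \cref{lem:relevant-generate} really applies to the $r_D$. This holds because, by \cref{prop:cocircuit-shortest} and \cref{lem:voronoi-parity}, every Voronoi-relevant vector $v$ is $\pm r_D$ for the cocircuit $D=D([v])$: indeed $\pm v$ are the only shortest vectors in their parity class, so that class corresponds to a cocircuit. This identification is what rules out the odd-index sublattice mentioned in the remark following \cref{lem:relevant-generate}.
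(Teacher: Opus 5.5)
Your proposal is correct and is essentially the paper's proof: every $r_D$ lies in some $K_{B'}$ by \cref{lem:every-cocircuit-fundamental}, $K_{B'}=K_B$ by \cref{prop:KB-invariant}, and the $r_D$ generate $L$ by \cref{lem:relevant-generate}, so $K_B=L$. Your extra check that each Voronoi-relevant vector equals $\pm r_D$ for the cocircuit $D=D([v])$ is sound; it just spells out the ``consequently'' clause of \cref{lem:relevant-generate}.
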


\begin{proof}
By \cref{prop:KB-invariant}, the sublattice $K_B$ is independent of $B$.  Every cocircuit is fundamental for some basis by \cref{lem:every-cocircuit-fundamental}, so every shortest cocircuit lift $r_D$ lies in this common sublattice.  \cref{lem:relevant-generate} tells us that those lifts generate all of $L$.  Therefore $K_B=L$.
\end{proof}

Theorem~\ref{thm:integral-cocircuit-basis} removes the possible odd-index discrepancy between the mod-$2$ dual basis and the integral lattice. We can now reconstruct the coordinate functionals that, in the graph case, record the flow through individual edges.

\section{Integral coedges and regularity}\label{sec:regularity}

Fix $e\in E$.  Analogous to the graph construction, where covectors record whether a graph cycle uses a particular edge, we now look for primitive covectors $a_e\in L^*$ that record whether a cocircuit contains $e$.

Choose a basis $B$ which contains $e$ and define
\begin{equation*}
  L_e(B)=
  \left\langle
  r_{D_b(B)}:b\in B\setminus\{e\}
  \right\rangle_{\ZZ}.
\end{equation*}
By \cref{thm:integral-cocircuit-basis}, this is a primitive rank-$(g-1)$ sublattice of $L$.

\begin{lemma}\label{lem:Le-independent}
The lattice $L_e(B)$ is independent of the choice of basis $B$ containing $e$.
\end{lemma}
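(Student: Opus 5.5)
We will mimic the proof of \cref{prop:KB-invariant}, but restricted to bases containing $e$. The plan is to connect any two bases containing $e$ by single exchanges that never remove $e$, and to check that each such exchange preserves $L_e(B)$.

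For connectivity, note that bases of $M$ containing $e$ correspond bijectively to bases of the contraction $M/e$ via $B\mapsto B\setminus\{e\}$. The basis graph of $M/e$ is connected. Hence any two bases $B,B'$ of $M$ containing $e$ are joined by a sequence of exchanges $B''=B-b+f$ with $b\neq e$ and $f\neq e$. Here $e$ is a loop of $M$ only if $e=0$, which is excluded since $e\in E$ is nonzero, so $M/e$ is a genuine rank-$(g-1)$ matroid. It therefore suffices to treat a single exchange $B'=B-b+f$ with $b\ne e$.

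For such an exchange we use \eqref{eq:cocircuit-pivot} and \eqref{eq:cocircuit-pivot-2}. The first gives $D_f(B')=D_b(B)$, so the generator indexed by $b$ is replaced by the same vector, now indexed by $f$, up to the sign ambiguity in $r_D$. For $c\in B\setminus\{b,e\}$, either $D_c(B')=D_c(B)$, or $D_c(B')=D_c(B)\triangle D_b(B)$. In the latter case these three cocircuits form a modular triple, and \cref{thm:metric-modular-triple} gives $r_{D_c(B')}=\pm r_{D_c(B)}\pm r_{D_b(B)}$. Both $r_{D_c(B)}$ and $r_{D_b(B)}$ belong to the generating set of $L_e(B)$, because $c,b\neq e$. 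So the new generators $\{r_{D_{c}(B')}:c\in B'\setminus\{e\}\}$ arise from the old ones by a signed elementary integral transformation, an upper-triangular unimodular change, and its inverse is of the same form. Hence $L_e(B')=L_e(B)$.

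The fundamental cocircuit $D_e$ also changes, to $D_e(B')=D_e(B)\triangle D_b(B)$ when $f\in D_e(B)$. This generator is excluded from $L_e$, so its change is irrelevant to the argument.

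The main subtlety is the reduction to exchanges avoiding $e$. The plan handles it through connectivity of the basis graph of $M/e$, which is standard; alternatively one could cite the fact that the bases containing a fixed element form a connected subgraph of the basis graph.
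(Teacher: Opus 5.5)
Your proposal is correct and follows essentially the same route as the paper. Like the paper, you connect bases containing $e$ through exchanges in the basis graph of $M/e$, then use \eqref{eq:cocircuit-pivot}, \eqref{eq:cocircuit-pivot-2} and \cref{thm:metric-modular-triple} to show that the generators indexed away from $e$ change only by signed elementary integral operations; your remark that the change in $D_e$ is irrelevant just makes explicit a point the paper leaves implicit.
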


\begin{proof}
The bases containing $e$ correspond, after deleting $e$, to the bases of the contraction $M/e$. The basis graph of the contraction is connected, so the bases containing $e$ in $M$ are connected via basis exchange through exchange operations avoiding $e$.  Consider an exchange $B'=B-b+f$
with $b\ne e$, so that both bases contain $e$. For $c\in B\setminus{b,e}$, \eqref{eq:cocircuit-pivot-2} shows that the corresponding new lift is either unchanged or differs from the old lift by $\pm r_{D_B(b)}$, while the lift indexed by $f$ equals $\pm r_{D_b(B)}.$ Hence the sets of generators indexed away from $e$ are related by signed elementary integral operations, and therefore $L_e(B')=L_e(B)$.
\end{proof}

According to \cref{lem:Le-independent}, we can write the lattice simply as $L_e$.

\begin{lemma}\label{lem:Le-cocircuit-description}
We have
\begin{equation*}
  L_e=\left\langle r_D:D\text{ a cocircuit and }e\notin D\right\rangle_{\ZZ}.
\end{equation*}
Furthermore, if $D$ is a cocircuit containing $e$, then the image of $r_D$ generates the quotient $L/L_e\cong\ZZ$.
\end{lemma}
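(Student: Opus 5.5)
We prove the two inclusions separately, and then treat the quotient claim by choosing a convenient basis.

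For the inclusion $L_e\subseteq\langle r_D: e\notin D\rangle_{\ZZ}$, fix a basis $B$ containing $e$. For $b\in B\setminus\{e\}$ the fundamental cocircuit $D_b(B)$ meets $B$ only in $b$, so $e\notin D_b(B)$. Hence every generator $r_{D_b(B)}$ of $L_e(B)$ is one of the vectors on the right.

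For the reverse inclusion, let $D$ be a cocircuit with $e\notin D$. We want a basis $B$ with $e\in B$ in which $D$ is fundamental. Let $H=E\setminus D$ be the complementary hyperplane; then $e\in H$. As in the proof of \cref{lem:every-cocircuit-fundamental}, extend $\{e\}$ to a basis $B_H$ of $H$, which is possible because $e$ is a nonloop (it is nonzero in $V^*$). Pick $d\in D$ and set $B=B_H\cup\{d\}$. Then $B$ is a basis containing $e$, and $D=D_d(B)$ with $d\neq e$. So $r_D$ is a generator of $L_e(B)$, which equals $L_e$ by \cref{lem:Le-independent}.

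For the quotient statement, let $D$ be a cocircuit containing $e$, and again pick a basis adapted to it. Set $H=E\setminus D$, take a basis $B_H$ of $H$, and put $B=B_H\cup\{e\}$. This is a basis because $e\in D$ lies outside the flat $H$ and so raises the rank to $g$. In it, $D=D_e(B)$. By \cref{thm:integral-cocircuit-basis}, the vectors $\{r_{D_b(B)}:b\in B\}$ form a $\ZZ$-basis of $L$. The first $g-1$ of these, those with $b\neq e$, form a basis of $L_e(B)=L_e$. Therefore $L/L_e\cong\ZZ$, and it is generated by the image of the remaining basis vector $r_{D_e(B)}=\pm r_D$.

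The sign ambiguity in $r_D$ does not matter, since $\pm$ a generator is still a generator.

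The main point requiring care is the independence from the choice of basis, which is already supplied by \cref{lem:Le-independent}. With that in hand, the only real work is choosing, for each cocircuit $D$, a basis in which $D$ is fundamental and which contains $e$ (when $e\notin D$) or is indexed by $e$ (when $e\in D$). Both choices are routine hyperplane-basis extensions.
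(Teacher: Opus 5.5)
Your proof is correct and follows the paper's argument essentially step for step. It uses the same fundamental-cocircuit observation for the first inclusion, the same basis $B=B_H\cup\{d\}$ with $e\in B_H$ for the reverse inclusion, and the same basis $B=B_H\cup\{e\}$ together with \cref{thm:integral-cocircuit-basis} for the quotient claim. Your extra justifications (that $e$ is a nonloop, that $B_H\cup\{e\}$ is a basis because $H$ is a flat, and that the sign of $r_D$ is irrelevant) just make explicit what the paper leaves implicit.
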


\begin{proof}
If $b\in B\setminus\{e\}$, then $D_b(B)\cap B=\{b\}$, so $e\notin D_b(B)$. So $L_e$ is generated by cocircuit lifts avoiding $e$.

Conversely, let $D$ be a cocircuit avoiding $e$.  Its complement $H=E\setminus D$ is a hyperplane containing $e$.  Choose a basis $B_H$ of $H$ that contains $e$, and choose $d\in D$.  Then $B=B_H\cup\{d\}$ is a basis containing $e$, and $D=D_d(B)$.  Hence $r_D\in L_e$.

Now let $D$ be a cocircuit that contains $e$.  Choose a basis $B_H$ of the hyperplane $E\setminus D$ and set $B=B_H\cup\{e\}$.  Then $D=D_e(B)$.  Then $r_D$, together with the $g-1$ generators of $L_e(B)$, is a $\ZZ$-basis of $L$.  Thus its class generates $L/L_e$.
\end{proof}

Because $L_e$ is primitive of corank one, $L/L_e\cong\ZZ$.  Choose a primitive covector $a_e\in L^*$, unique up to sign, with  $\ker a_e=L_e$. We may normalize it by requiring $a_e(r_D)=1$ for one chosen cocircuit $D$ containing $e$. We call $a_e$ the \emph{reconstructed coedge}, as an analogy with the graph case.

\begin{proposition}\label{prop:coedge-rule}
For every cocircuit $D$,
\begin{equation*}
  a_e(r_D)=
  \begin{cases}
    0,&e\notin D,\\
    \pm1,&e\in D.
  \end{cases}
\end{equation*}
Furthermore, the reduction of $a_e$ modulo $2$ is the original character $e\in V^*$.
\end{proposition}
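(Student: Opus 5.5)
The plan is to read everything off \cref{lem:Le-cocircuit-description} together with the choice of $a_e$ as a primitive covector with $\ker a_e = L_e$. For the first case there is nothing to do: if $e\notin D$, then $r_D\in L_e$ by \cref{lem:Le-cocircuit-description}, so $a_e(r_D)=0$. For the second case, since $L_e$ is primitive of corank one, $a_e$ induces an isomorphism $L/L_e\xrightarrow{\sim}\ZZ$. If $e\in D$, then \cref{lem:Le-cocircuit-description} says the class of $r_D$ generates $L/L_e$. Its image under this isomorphism is therefore a generator of $\ZZ$, i.e.\ $a_e(r_D)=\pm1$. The sign ambiguity of $r_D$ is harmless because the statement only asserts $\pm1$.

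For the mod-$2$ statement, let $\bar a_e\in V^*$ be the reduction of $a_e$, so $\bar a_e([x]) = a_e(x)\bmod 2$. It suffices to check that $\bar a_e$ and $e$ agree on a basis of $V$. Choose a basis $B$ of $M$ containing $e$. By \cref{thm:integral-cocircuit-basis}, the lifts $r_{D_b(B)}$, $b\in B$, form a $\ZZ$-basis of $L$. Their classes $\alpha_b^B$ are the dual basis to $B$, hence a basis of $V$.

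For $b\in B\setminus\{e\}$ we have $e\notin D_b(B)$, because $D_b(B)\cap B=\{b\}$. Hence $a_e(r_{D_b(B)})=0$ and also $e(\alpha_b^B)=\delta_{b,e}=0$. For $b=e$ we have $e\in D_e(B)$, so $a_e(r_{D_e(B)})=\pm1\equiv 1$, which agrees with $e(\alpha_e^B)=1$. Thus $\bar a_e=e$ on a basis of $V$, and so $\bar a_e=e$.

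Alternatively, one could argue directly for every cocircuit: $a_e(r_D)\equiv 1$ exactly when $e\in D$, i.e.\ exactly when $e(\alpha_D)=1$. Cocircuit classes span $V$ by \cref{lem:E-spans} and \cref{lem:every-cocircuit-fundamental}, which gives the same conclusion. I do not expect any real obstacle here. The substantive input is already contained in \cref{thm:integral-cocircuit-basis} and \cref{lem:Le-cocircuit-description}, and the only point needing care is that the normalization of $a_e$ plays no role, since all claims are up to sign.
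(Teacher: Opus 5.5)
Your proposal is correct and follows the paper's proof essentially step for step. The $0$ and $\pm1$ cases come from \cref{lem:Le-cocircuit-description} and the primitivity of $a_e$, and you verify the mod-$2$ claim on the dual basis $\{\alpha_b^B\}$ for a basis $B\ni e$, which is the same as the paper's check that $a_e \bmod 2$ vanishes on $\ker(e)$ and is odd on $r_{D_e(B)}$.
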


\begin{proof}
If $e\notin D$, then $r_D\in L_e$ by \cref{lem:Le-cocircuit-description}, so $a_e(r_D)=0$.  If $e\in D$, then $r_D$ generates $L/L_e$, so a primitive homomorphism $L/L_e\to\ZZ$ takes it to $\pm1$.

As for the reduction modulo $2$, pick a basis $B$ containing $e$.  The classes of the fundamental lifts indexed by $B\setminus\{e\}$ span $\ker(e)\subseteq V$, and their integral span is $L_e$. So $a_e\bmod2$ vanishes on $\ker(e)$.  It is nonzero on the remaining one-dimensional quotient, since it takes the odd value $\pm1$ on $r_{D_e(B)}$. Therefore $a_e\bmod 2=e$.
\end{proof}

Now we use the coedges to construct a unimodular matrix representing the binary support matroid. Choose a $\ZZ$-basis of $L$.  Regard each $a_e$ as a column in $\ZZ^g$, and let
\[
  A=(a_e)_{e\in E}\in\ZZ^{g\times |E|}.
\]

\begin{definition}
A rank-$g$ integral matrix is \emph{unimodular} if every $g\times g$ minor is $0$ or $\pm1$.  It is \emph{totally unimodular} if every square minor is $0$ or $\pm1$.
\end{definition}

\begin{remark}
    Unimodularity is also called ``weak unimodularity" in parts of the matroid literature.
\end{remark}

Unimodularity is invariant under changing the $\ZZ$-basis of $L$. Total unimodularity is not: multiplying on the left by a matrix in $\GL_g(\ZZ)$ can create entries larger than $1$.  However, a full-rank unimodular matrix will become totally unimodular after normalizing one basis block to the identity.

\cref{prop:coedge-rule} shows that the reconstructed covectors behave exactly like the coordinates of signed cocircuit vectors in a regular representation. We now verify that they provide a unimodular integral lift of the original binary support matroid.

\begin{theorem}\label{thm:A-unimodular}
The column matroid of $A$ is $M$, and $A$ is unimodular. After an integral change of basis of $L$, $A$ is totally unimodular, therefore $M$ is a regular matroid.
\end{theorem}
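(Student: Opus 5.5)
The plan is to work in the $\ZZ$-basis of $L$ furnished by \cref{thm:integral-cocircuit-basis}. Fix a basis $B$ of $M$ and take the fundamental lifts $r_b=r_{D_b(B)}$, $b\in B$, as a $\ZZ$-basis of $L$; the columns $a_e$ are then written in the dual basis of $L^*$, i.e.\ $A_{b,e}=a_e(r_b)$. By \cref{prop:coedge-rule}, $A_{b,e}\in\{0,\pm1\}$, and $A_{b,e}\neq0$ exactly when $e\in D_b(B)$. For $e=c\in B$ we have $c\in D_b(B)$ iff $b=c$, so the $B$-block of $A$ is a diagonal $\pm1$ matrix. After replacing some $r_b$ by $-r_b$ (an integral change of basis), this block is the identity, and $A=[I\mid N]$ where $N$ has entries in $\{0,\pm1\}$ and its support pattern is the fundamental-cocircuit incidence of $B$.

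First I show that the column matroid of $A$ is $M$. By \cref{prop:coedge-rule}, the reduction of $A$ modulo $2$ has columns $e\in V^*$ in the basis dual to the $\alpha_b$, so $A\bmod 2$ represents $M$ over $\Ftwo$. It therefore suffices to prove that a $g$-subset $B'$ gives a nonsingular real minor iff it gives a nonsingular mod-$2$ minor. This follows once every maximal minor is shown to lie in $\{0,\pm1\}$, because then a minor is nonzero over $\QQ$ iff it is odd.

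The core step is unimodularity. The cleanest route is to show that for every basis $B'$ of $M$ the minor $\det A_{B'}=\pm1$, and that for non-bases it is $0$. For a basis $B'$, change coordinates to the lifts $r_{D_{b'}(B')}$. These form a $\ZZ$-basis by \cref{thm:integral-cocircuit-basis}, so the change-of-basis matrix lies in $\GL_g(\ZZ)$ and has determinant $\pm1$. In the new coordinates the $B'$-block is again diagonal $\pm1$ by the same argument applied to $B'$. Hence $\det A_{B'}=\pm1$ in any integral basis. For a non-basis $S$ of $M$, suppose $\det A_S\neq0$ over $\QQ$. I would argue it cannot be odd, since mod $2$ the columns are dependent. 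To exclude an even nonzero value, pick a circuit $C\subseteq S$ of $M$ and extend $C-c$ to a basis $B'$ of $M$. In $B'$-coordinates the column $a_c$ has entries $a_c(r_{D_{b'}(B')})$, which vanish unless $c\in D_{b'}(B')$, i.e.\ unless $b'$ lies in the fundamental circuit of $c$, which is $C-c$. So $a_c$ lies in the span of $\{a_{b'}:b'\in C-c\}$, since those columns are $\pm$ unit vectors in these coordinates. Thus $S$ is dependent over $\QQ$ and $\det A_S=0$. This simultaneously shows that the column matroid of $A$ is $M$.

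Finally, total unimodularity of $[I\mid N]$ follows from unimodularity by the standard pivot argument. Any square submatrix of $N$ with rows $R$ and columns $T$ has determinant equal, up to sign, to the maximal minor on columns $T\cup\{e_b: b\in B\setminus R\}$ of $[I\mid N]$, which lies in $\{0,\pm1\}$. Hence $M$ has a totally unimodular representation and is regular. I expect the main obstacle to be the non-basis case, namely ruling out even nonzero minors. The fundamental-circuit support argument is where the coedge rule of \cref{prop:coedge-rule} must be used with care regarding which entries vanish.
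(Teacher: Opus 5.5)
Your proof is correct, and its overall structure matches the paper's. In both, basis minors are $\pm1$ because in the $\ZZ$-basis of fundamental lifts the $B$-block is a signed diagonal by \cref{prop:coedge-rule}. In both, total unimodularity comes from pivoting to $[I\mid N]$. There you should also say, as the paper does, that minors of $[I\mid N]$ involving identity columns reduce to minors of $N$ by expansion.

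The one genuinely different step is the non-basis case. The paper argues on the cocircuit side. A $g$-set $S$ that is not a basis lies in a hyperplane, and the complementary cocircuit $D$ gives a nonzero lattice vector $r_D$ on which every $a_e$ with $e\in S$ vanishes. So $\det A_S=0$ at once, with no change of coordinates.

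You instead take your circuit $C\subseteq S$ and pass to coordinates adapted to a basis $B'\supseteq C-c$. You then use the standard duality $c\in D_{b'}(B')\iff b'\in C(c,B')$, where the fundamental circuit is $C(c,B')=C$. This shows $a_c$ is supported on the rows indexed by $C-c$, where the columns $a_{b'}$ are signed unit vectors. Your route costs an extra matroid fact and a coordinate change. In return it gives the explicit relation $a_c=\sum_{b'\in C-c}\pm a_{b'}$, showing directly that the coedges satisfy signed $\pm1$ circuit relations.

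Your preliminary mod-$2$ discussion (odd versus even nonzero minors) is unnecessary. Once bases give $\pm1$ and non-bases give $0$, the bases of the column matroid of $A$ are exactly those of $M$.
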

\begin{proof}
    Let $B$ be a basis of $M$.  Use the $\ZZ$-basis $\{r_{D_b(B)}:b\in B\}$ of $L$.  For $b,e\in B$, the fundamental cocircuit $D_b(B)$ contains $e$ exactly when $e=b$.  By \cref{prop:coedge-rule}, the pairing matrix is diagonal with diagonal entries $\pm1$, and therefore the determinant $\det A_B=\pm1$. 

    Now let $S \subset E$ be a subset of size $g$ which is not a basis. It must lie in some hyperplane $H$. The complement $D=E\setminus H$ is a cocircuit disjoint from $S$.  For every $e\in S$, \cref{prop:coedge-rule} gives $a_e(r_D)=0$, and therefore the corresponding determinant $\det A_S = 0$. Thus the nonzero minors are exactly the basis minors and are all equal to $\pm 1$, as desired.

    For total unimodularity it is a standard argument: choose one basis $B$ and multiply on the left by $A_B^{-1}\in\GL_g(\ZZ)$.  The resulting matrix has the form
\[
  [I_g\mid N].
\]
Every square minor of $N$ can be completed to a maximal minor by adjoining complementary identity columns, so it is $0$ or $\pm1$.  Expanding any mixed minor of $[I_g\mid N]$ along its identity columns reduces it to a minor of $N$ or to zero.  Hence $[I_g\mid N]$ is totally unimodular, and the matroid $M$ is regular.
\end{proof}

\section{Reconstructing the quadratic form}\label{sec:reconstructing}

Using the integral covectors, define a quadratic form
\begin{equation*}
  Q_A[x]=\sum_{e\in E}\lambda_e a_e(x)^2.
\end{equation*}

In the previous section, we showed that $Q_A$ is matroidal. It remains to prove that $Q_A=Q$. We note that it is not enough to say that $Q$ and $Q_A$ have the same parity-minimum table. The table is invariant under integral basis changes acting trivially modulo 2. Injectivity of the full vonorm table is a separate and subtle problem.  Instead, we construct one common representative in every parity class on which the two forms agree. Then we can apply a parity-rigidity lemma of \cite{sikiric2022iso}.

We recall the following standard property of binary cocycles (see \cite[Theorem 9.1]{oxley}, by duality.
\begin{lemma}\label{lem:disjoint-cocircuits}
Every nonzero cocycle $c\in C$ is a disjoint union of cocircuits:
\[
  \supp(c)=D_1\sqcup\cdots\sqcup D_k.
\]
\end{lemma}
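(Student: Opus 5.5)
I would argue by induction on the size of the support $|\supp(c)|$, using only the definition of cocircuit as an inclusion-minimal nonempty support of a nonzero element of $C$ together with the linearity of $C$ over $\Ftwo$.

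\textbf{Inductive step.} Let $0\ne c\in C$. The family of supports of nonzero cocycles contained in $\supp(c)$ is nonempty, since it contains $\supp(c)$ itself, and it is finite. Choose an inclusion-minimal member $D_1=\supp(c_1)$ with $0\ne c_1\in C$. Then $D_1$ is a cocircuit by definition: any nonzero cocycle whose support lies inside $D_1$ also has support inside $\supp(c)$, so minimality within the smaller family is the same as minimality among all nonzero cocycle supports.

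\textbf{Peeling off $D_1$.} Set $c'=c+c_1\in C$. The key point is that $c_1$ is a $0/1$ vector whose support lies inside $\supp(c)$. Coordinatewise over $\Ftwo$, $c_1$ agrees with $c$ on $D_1$ and vanishes elsewhere. Hence
\[
  \supp(c')=\supp(c)\setminus D_1 .
\]
If $c'=0$, then $\supp(c)=D_1$ and we are done. Otherwise $c'$ is a nonzero cocycle with strictly smaller support, so by induction $\supp(c')=D_2\sqcup\cdots\sqcup D_k$ with each $D_i$ a cocircuit. Since every $D_i$ lies in $\supp(c)\setminus D_1$, the union $D_1\sqcup D_2\sqcup\cdots\sqcup D_k$ is disjoint and equals $\supp(c)$.

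The only step needing care is the support identity $\supp(c+c_1)=\supp(c)\setminus D_1$. It is where binarity is used: over $\Ftwo$ a cocycle is determined by its support, so subtracting a sub-support cocycle removes exactly that support. Everything else is routine bookkeeping.

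If one prefers to phrase this in the paper's notation, take $c=\iota(\gamma)$ and $c_1=\iota(\alpha)$. Then $c'=\iota(\gamma+\alpha)$, and the argument reads $D(\gamma)=D(\alpha)\sqcup D(\gamma+\alpha)$ whenever $D(\alpha)\subseteq D(\gamma)$. This is exactly the decomposition already used in the proof of \cref{prop:cocircuit-shortest}.
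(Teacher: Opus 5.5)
Your proof is correct and follows essentially the same approach as the paper: pick a nonzero cocycle of inclusion-minimal support inside $\supp(c)$, note that its support is a cocircuit $D_1$, add it to $c$ to remove exactly the coordinates of $D_1$, and repeat. You phrase the repetition as strong induction on $|\supp(c)|$ and spell out the support identity $\supp(c+c_1)=\supp(c)\setminus D_1$, which the paper states without elaboration.
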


\begin{proof}
Choose a nonzero cocycle of inclusion-minimal support among the nonzero cocycles whose support is contained in $\supp(c)$. Its support is a cocircuit, call it $D_1$.  Replace $c$ by $c+D_1$.  Because $D_1\subseteq\supp(c)$, this removes the coordinates of $D_1$.  We repeat this until the support is empty.
\end{proof}

\begin{lemma}\label{lem:orthogonal}
If $D_1,\ldots,D_k$ are pairwise disjoint cocircuits, then
\[
  \angles{r_{D_i}}{r_{D_j}}=0 \quad \text{ for all } i \neq j \in [k].
\]

Further, for every choice of signs
$\varepsilon_1,\ldots,\varepsilon_k\in\{\pm1\}$, the vector
\[
    x=\sum_{i=1}^k \varepsilon_i r_{D_i}
\]
is a shortest vector in the class $\alpha_{D_1}+\cdots+\alpha_{D_k}$. In particular,
\[
    Q[x]
    =
    \sum_{i=1}^k Q[r_{D_i}]
    =
    \mu\left(\alpha_{D_1}+\cdots+\alpha_{D_k}\right).
\]
\end{lemma}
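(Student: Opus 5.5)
First I handle two disjoint cocircuits, and then I use that pairwise statement to get the general claim. Let $D_i,D_j$ be disjoint cocircuits with $i\neq j$, and write $\alpha=\alpha_{D_i}$, $\beta=\alpha_{D_j}$. Since $D(\alpha)\cap D(\beta)=D_i\cap D_j=\emptyset$, \cref{lem:defect} gives $\mu(\alpha+\beta)=\mu(\alpha)+\mu(\beta)$. Both vectors $r_{D_i}+r_{D_j}$ and $r_{D_i}-r_{D_j}$ lie in the class $\alpha+\beta$, so each has $Q$-value at least $\mu(\alpha+\beta)$. On the other hand, the parallelogram identity gives
\[
  Q[r_{D_i}+r_{D_j}]+Q[r_{D_i}-r_{D_j}]=2Q[r_{D_i}]+2Q[r_{D_j}]=2\mu(\alpha+\beta).
\]
The two terms are each at least $\mu(\alpha+\beta)$ and sum to $2\mu(\alpha+\beta)$, so both equal $\mu(\alpha+\beta)$. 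Subtracting them gives $4\angles{r_{D_i}}{r_{D_j}}=0$, which is the orthogonality claim.

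For the second part, take any signs $\varepsilon_i$ and set $x=\sum_i\varepsilon_i r_{D_i}$. By pairwise orthogonality,
\[
  Q[x]=\sum_i Q[r_{D_i}]=\sum_i\mu(\alpha_{D_i})=\sum_i\sum_{e\in D_i}\lambda_e,
\]
where the last equality uses \eqref{eq:mu-D}. Let $\gamma=\alpha_{D_1}+\cdots+\alpha_{D_k}$. The map $\iota$ is linear, and the supports $D_i$ are pairwise disjoint, so $D(\gamma)=D_1\sqcup\cdots\sqcup D_k$. Applying \eqref{eq:mu-D} again gives $\mu(\gamma)=\sum_i\sum_{e\in D_i}\lambda_e=Q[x]$. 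Since $[x]=\gamma$, this shows $x$ attains the minimum in its class, as claimed.

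The only step with real content is the first one. Additivity of the vonorm across the two disjoint classes, combined with the parallelogram identity, squeezes both sums $r_{D_i}\pm r_{D_j}$ to be minimal, and that forces orthogonality. Everything after that is bookkeeping with \eqref{eq:mu-D}.
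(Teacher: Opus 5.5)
Your proof is correct and follows the paper's argument essentially step for step. In both, \cref{lem:defect} and the parallelogram identity force $r_{D_i}\pm r_{D_j}$ to be minimal in their common class, which gives orthogonality. The shortest-vector claim then follows from \eqref{eq:mu-D} applied to $D(\alpha_{D_1}+\cdots+\alpha_{D_k})=D_1\sqcup\cdots\sqcup D_k$.
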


\begin{proof}
We begin with pairwise orthogonality. By \cref{lem:defect}, disjointness of $D_i, D_j$ gives
\[
  \mu(\alpha_{D_i}+\alpha_{D_j})
  =\mu(\alpha_{D_i})+\mu(\alpha_{D_j}).
\]
Both $r_{D_i}+r_{D_j}$ and $r_{D_i}-r_{D_j}$ lie in the class $\alpha_{D_i}+\alpha_{D_j}$.  We have
\[
  Q[r_{D_i} - r_{D_j}] + Q[r_{D_i} + r_{D_j}] = 2Q[r_{D_i}]+2Q[r_{D_j}]
  =2\mu(\alpha_{D_i}+\alpha_{D_j}).
\]
Each of $Q[r_{D_i} - r_{D_j}], Q[r_{D_i} + r_{D_j}]$ is at least the minimum $\mu(\alpha_{D_i}+\alpha_{D_j})$, so both attain it.  On the other hand, subtracting them gives
\[
  Q[r_{D_i} + r_{D_j}] - Q[r_{D_i} - r_{D_j}]  = 4\angles{r_{D_i}}{r_{D_j}} = 0.
\]

Now let $x=\sum_{i=1}^k \varepsilon_i r_{D_i}.$ Since signs do not affect reduction modulo $2L$, we have $[x] = \alpha_{D_1}+\cdots+\alpha_{D_k}.$ By pairwise orthogonality,
\[
    Q[x]
    =
    \sum_{i=1}^k Q[r_{D_i}]
    =
    \sum_{i=1}^k \mu(\alpha_{D_i}).
\]
Because the $D_i$ are pairwise disjoint,
\[
    D\left(\alpha_{D_1}+\cdots+\alpha_{D_k}\right)
    =
    D_1\sqcup\cdots\sqcup D_k.
\]
Then the formula \eqref{eq:mu-D} gives
\[
    \mu\left(\alpha_{D_1}+\cdots+\alpha_{D_k}\right) =
    \sum_{e\in D_1\sqcup\cdots\sqcup D_k}\lambda_e =
    \sum_{i=1}^k\sum_{e\in D_i}\lambda_e =
    \sum_{i=1}^k\mu(\alpha_{D_i}).
\]
And then
\[
    Q[x]
    =
    \mu\left(\alpha_{D_1}+\cdots+\alpha_{D_k}\right),
\]
so $x$ is a shortest vector in its class.

\end{proof}

Let $\alpha\in V$ and decompose its cocycle as
\[
  D(\alpha)=D_1\sqcup\cdots\sqcup D_k
\]
according to \cref{lem:disjoint-cocircuits}.  Define
\begin{equation*}
  x_\alpha=r_{D_1}+\cdots+r_{D_k}.
\end{equation*}
For $\alpha=0$, we take $x_0=0$.

\begin{proposition}\label{prop:common-representative}
For every $\alpha\in V$, the vector $x_\alpha$ lies in the class $\alpha$ and simultaneously minimizes $Q$ and
$Q_A$. That is, we have
\[
    Q[x_\alpha]
    =
    \min_{\substack{x\in L\\ [x]=\alpha}}Q[x]
    =
    \mu(\alpha)
    =
    \min_{\substack{x\in L\\ [x]=\alpha}}Q_A[x]
    =
    Q_A[x_\alpha].
\]
\end{proposition}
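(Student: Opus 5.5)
We argue separately for $Q$ and for $Q_A$, and the two halves meet at the common value $\mu(\alpha)$. The case $\alpha=0$ is trivial because $x_0=0$ and both forms are positive-definite. So assume $\alpha\neq0$ and write $D(\alpha)=D_1\sqcup\cdots\sqcup D_k$ as in \cref{lem:disjoint-cocircuits}. The lifts satisfy $[r_{D_i}]=\alpha_{D_i}$, and $D(\alpha_{D_1}+\cdots+\alpha_{D_k})=D(\alpha)$. Since $\iota$ is injective, $[x_\alpha]=\alpha$. \cref{lem:orthogonal} then gives directly that $x_\alpha$ is $Q$-shortest in its class and that $Q[x_\alpha]=\mu(\alpha)$. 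This settles the $Q$ side.

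For the $Q_A$ side we first compute $Q_A[x_\alpha]$. By \cref{prop:coedge-rule}, $a_e(r_{D_i})\in\{0,\pm1\}$, and it is nonzero exactly when $e\in D_i$. Because the $D_i$ are pairwise disjoint, each $e\in E$ lies in at most one $D_i$. Hence $a_e(x_\alpha)=\sum_i a_e(r_{D_i})$ equals $\pm1$ for $e\in D(\alpha)$ and $0$ otherwise. This gives
\[
Q_A[x_\alpha]=\sum_{e\in D(\alpha)}\lambda_e=\mu(\alpha)
\]
by \eqref{eq:mu-D}. Note that no cancellation issue arises, since the supports never overlap.

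It remains to show that $\mu(\alpha)$ is a lower bound for $Q_A$ on the class $\alpha$. Take any $x\in L$ with $[x]=\alpha$. By \cref{prop:coedge-rule}, $a_e\bmod 2=e$, so $a_e(x)\equiv e(\alpha)\pmod 2$. In particular $a_e(x)$ is odd, hence $a_e(x)^2\ge1$, for every $e\in D(\alpha)$. Since all $\lambda_e>0$, we get
\[
Q_A[x]\ge\sum_{e\in D(\alpha)}\lambda_e a_e(x)^2\ge\sum_{e\in D(\alpha)}\lambda_e=\mu(\alpha).
\]
Therefore $\min_{[x]=\alpha}Q_A[x]=\mu(\alpha)=Q_A[x_\alpha]$, and this completes the chain of equalities.

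The only step I expect to need care is the mod-$2$ compatibility of the reconstructed coedges, namely $a_e(x)\equiv e([x])$, since it drives the $Q_A$ lower bound. That compatibility is exactly the second part of \cref{prop:coedge-rule}, so the rest is bookkeeping. In particular, one should check that the disjointness of the $D_i$ is used both in \cref{lem:orthogonal} and in evaluating the coedges on $x_\alpha$.
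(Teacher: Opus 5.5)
Your proposal is correct and follows essentially the same route as the paper's proof. You use injectivity of $\iota$ to place $x_\alpha$ in the class $\alpha$, \cref{lem:orthogonal} for the $Q$ side, the coedge rule on pairwise disjoint cocircuits to evaluate $Q_A[x_\alpha]$, and $a_e \bmod 2 = e$ for the lower bound; for $\alpha=0$ you only need $Q_A\ge 0$, which is immediate from $\lambda_e>0$, not positive-definiteness.
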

\begin{proof}
    The cocycle with support $D_i$ is $\iota(\alpha_{D_i})$. Since the
supports $D_i$ are disjoint and their union is $D(\alpha)$, we have
\[
    \iota(\alpha)
    =
    \iota(\alpha_{D_1})
    +\cdots+
    \iota(\alpha_{D_k})
\]
in $\mathbb F_2^E$. Because $\iota$ is injective, we have $\alpha = \alpha_{D_1}+\cdots+\alpha_{D_k}.$ Consequently, $[x_\alpha]=\alpha$.

By \cref{lem:orthogonal},
\[
    Q[x_\alpha] =
    \sum_{i=1}^k Q[r_{D_i}] =
    \sum_{i=1}^k \mu(\alpha_{D_i}) =
    \sum_{i=1}^k\sum_{e\in D_i}\lambda_e =
    \sum_{e\in D(\alpha)}\lambda_e =
    \mu(\alpha).
\]
Thus $x_\alpha$ minimizes $Q$ in its parity class.

Next, we evaluate $Q_A$ on $x_\alpha$. Fix $e\in E$. Since the
cocircuits $D_1,\ldots,D_k$ are pairwise disjoint, the element $e$
belongs to at most one of them. Then, from \cref{prop:coedge-rule},
it follows that
\[
    a_e(x_\alpha)^2
    =
    \begin{cases}
        1, & e\in D(\alpha),\\
        0, & e\notin D(\alpha).
    \end{cases}
\]
And then
\[
    Q_A[x_\alpha]
    =
    \sum_{e\in E}\lambda_e a_e(x_\alpha)^2
    =
    \sum_{e\in D(\alpha)}\lambda_e
    =
    \mu(\alpha).
\]

It remains to verify that $x_\alpha$ is a shortest vector for $Q_A$ in its parity class. Let $y\in L$ be any representative of $\alpha$. Since the reduction of $a_e$ modulo $2$ is $e$, we have $a_e(y)\equiv e(\alpha)\pmod 2.$ In particular, for every $e\in D(\alpha)$, the integer $a_e(y)$ is
odd, and hence $a_e(y)^2\geq1.$ Since all $\lambda_e$ are positive, we have
\begin{align*}
    Q_A[y]
    &=
    \sum_{e\in E}\lambda_e a_e(y)^2 \\
    &\geq
    \sum_{e\in D(\alpha)}\lambda_e =
    \mu(\alpha) =
    Q_A[x_\alpha].
\end{align*}
and we are done.
\end{proof}

\begin{lemma}\cite[Lemma 3.1]{sikiric2022iso}\label{lem:parity-rigidity}
Let $H$ be a real quadratic form on $L_{\RR}$.  Suppose that for every class $\alpha\in L/2L$ there is a representative $x_\alpha\in\alpha$ such that
\[
  H[x_\alpha]=0.
\]
Then $H=0$.
\end{lemma}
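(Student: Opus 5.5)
The plan is to show that $H$ vanishes on a spanning set of "rank-one directions" by using only a small number of parity classes and the quadratic identities they produce. Write $B_H(x,y)=\tfrac12(H[x+y]-H[x]-H[y])$ for the associated symmetric bilinear form. Since $H$ is a quadratic form, $H[x+2z]=H[x]+4B_H(x,z)+4H[z]$. The hypothesis gives only one representative per class, so the first step is to extract linear information from this representative freedom.

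Fix an integral basis $v_1,\dots,v_g$ of $L$ and identify $H$ with a symmetric matrix $(h_{ij})$. For a class $\alpha$, the chosen representative can be written $x_\alpha=\sum_i (s_i+2m_i)v_i$ with $s_i\in\{0,1\}$ determined by $\alpha$ and $m_i\in\ZZ$ arbitrary. The condition $H[x_\alpha]=0$ then reads $\sum_{i,j}h_{ij}c_ic_j=0$, where $c_i=s_i+2m_i$ has prescribed parity. So the $2^g$ hypotheses say that $H$ has an integer zero in every parity pattern, including the zero vector for $\alpha=0$.

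The difficulty is that $H$ is only assumed to be a real form, not positive definite. For an indefinite or degenerate form, isotropic integer vectors can exist in every parity class: for example $H=xy$ on $\ZZ^2$, with $(1,0)$, $(0,1)$, $(0,0)$, and $(1,1)$ would require $1\cdot1=0$, which fails. Still, this shows the statement needs care. I would therefore check that in the application $H=Q-Q_A$ and the representatives are the specific $x_\alpha$ of \cref{prop:common-representative}. Those representatives are simultaneous minimizers, which gives \emph{more} than $H[x_\alpha]=0$. For every $z\in L$ we have $Q[x_\alpha+2z]\ge Q[x_\alpha]$ and likewise for $Q_A$, and the hope is to combine these with the cited lemma's hypotheses as stated.

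Since this is a cited result \cite[Lemma 3.1]{sikiric2022iso}, the intended proof most likely proceeds by induction on the parity pattern using $H[x+y]+H[x-y]=2H[x]+2H[y]$ for representatives $x\in\alpha$, $y\in\beta$. Both $x\pm y$ lie in $\alpha+\beta$, but neither is necessarily the chosen representative, so one must control $H$ on the whole class. That control is exactly the main obstacle. A counterexample search suggests the lemma as literally stated is false: take $H=xy$ with representatives $(1,0)$, $(0,1)$, and $(1,3)\mapsto 3\neq0$, so one would need a different odd-odd representative. In fact any odd-odd vector has $xy$ odd and hence nonzero, so this $H$ is not a counterexample. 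I would then try $H=x^2-2y^2$-type forms, where odd-odd zeros need $x^2=2y^2$, which is impossible. So I would attempt a genuine proof: show each $h_{ii}=0$ using the class of $v_i$, where $c_i$ is odd and the other coordinates are even, by a 2-adic valuation argument on the integer-coefficient reduction. This step is the hard part, and for real coefficients it is unclear how to carry it out, so I would first restrict to rational $H$ and handle real $H$ by a linear-algebra specialization over a $\mathbb{Q}$-basis of the coefficients.
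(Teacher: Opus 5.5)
Your proposal does not contain a proof. The step you call ``the hard part'' is left undone, and the concrete plan you sketch for it would not work as stated. From the single class $[v_i]$ you learn only that, for an integral form, the coefficient of $x_i^2$ is even, never that it is zero. Evenness turns into vanishing only through a normalization of \emph{all} coefficients at once, either a primitive integral solution or an infinite $2$-adic descent. That in turn requires showing the cross coefficients are even too, which uses the classes $[v_i+v_j]$ that your plan never touches. Also, if you normalize the matrix entries $h_{ij}$ rather than the polynomial coefficients, the cross terms $2h_{ij}x_ix_j$ vanish modulo $2$, so an odd off-diagonal entry gives no contradiction.

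The rest of the proposal follows false leads. The lemma holds for arbitrary real $H$, with no definiteness needed, so there is no reason to doubt it, search for counterexamples, or import the minimizing property of the $x_\alpha$ from \cref{prop:common-representative}. Since the $x_\alpha$ are fixed, neither the identity for $H[x+2z]$ nor the parallelogram law gives usable information.

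The missing idea is to treat the hypotheses as a linear system in an unknown form, not as statements about $H$ alone. Fix the $x_\alpha$. The conditions $H'[x_\alpha]=0$ are then homogeneous linear equations with integer coefficients in the polynomial coefficients $c_{ii}=h'_{ii}$ and $c_{ij}=2h'_{ij}$ $(i<j)$ of $H'$.

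A nonzero real solution $H$ forces a nonzero rational one. Your specialization over a $\mathbb{Q}$-basis of the coefficients of $H$ achieves exactly this and is fine. Scaling then gives an integral solution whose $c_{ij}$ are not all even.

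Reduce modulo $2$ and use $x_\alpha\equiv\alpha$: the polynomial $\sum_{i\le j}\bar c_{ij}x_ix_j$ vanishes at every point of $\Ftwo^g$. Evaluating at $e_i$ gives $\bar c_{ii}=0$, and then evaluating at $e_i+e_j$ gives $\bar c_{ij}=0$. This contradicts the choice that not all $c_{ij}$ were even.

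This is the paper's argument. Your ``2-adic valuation'' idea becomes correct only when it is run in this form, on all polynomial coefficients simultaneously and including the classes $e_i+e_j$.
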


\begin{proof}
Choose a basis $L\cong\ZZ^g$.  The conditions $H[x_\alpha]=0$ are homogeneous linear equations with integer coefficients in the coefficients of $H$.  If they had a nonzero real solution, they would have a nonzero rational solution.  After scaling, choose a nonzero solution whose polynomial coefficients are integers and not all even.

Reduce the resulting homogeneous quadratic polynomial modulo $2$.  Because $x_\alpha\equiv\alpha\pmod2$, the reduced polynomial vanishes at every point of $\Ftwo^g$.  Evaluating at the standard basis vector $e_i$ shows that every coefficient of $x_i^2$ is zero.  Evaluating at $e_i+e_j$ then shows that every coefficient of $x_ix_j$ is zero.  This contradicts the choice that not all coefficients were even.
\end{proof}

\begin{theorem}\label{thm:Q-reconstruction}
    In any integral basis of $L$,
\[
  Q=A\diag(\lambda_e)A^{\mathsf T}.
\]
That is,
\[Q[x]=\sum_{e\in E}\lambda_e a_e(x)^2
  \qquad(x\in L_{\RR}).\]
\end{theorem}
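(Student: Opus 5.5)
The plan is to apply the parity-rigidity \cref{lem:parity-rigidity} to the difference
\[
  H=Q-Q_A,
\]
which is a real quadratic form on $L_{\RR}$. For each class $\alpha\in V$ we take the representatives $x_\alpha$ already built from the disjoint cocircuit decomposition of $D(\alpha)$ in \cref{lem:disjoint-cocircuits}, with $x_0=0$.

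By \cref{prop:common-representative}, each $x_\alpha$ lies in the class $\alpha$ and satisfies $Q[x_\alpha]=\mu(\alpha)=Q_A[x_\alpha]$. Hence $H[x_\alpha]=0$ for every $\alpha\in L/2L$, and for $\alpha=0$ this is trivial. The hypotheses of \cref{lem:parity-rigidity} are therefore satisfied, and the lemma gives $H=0$, that is, $Q=Q_A$ as quadratic forms on $L_{\RR}$.

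It remains to translate this into the matrix statement. After fixing an integral basis of $L$, write $a_e$ as columns of $A$. Then
\[
  Q_A[x]=\sum_e\lambda_e a_e(x)^2=x^{\mathsf T}A\diag(\lambda_e)A^{\mathsf T}x,
\]
so the Gram matrix of $Q$ is $A\diag(\lambda_e)A^{\mathsf T}$. Both sides transform by the same congruence under a change of integral basis, so the identity holds in any integral basis.

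No step here is a real obstacle, since all the substantive work is contained in \cref{prop:common-representative}. The one point to check is that \cref{lem:parity-rigidity} needs only a single zero of $H$ in each parity class, with no minimality condition. That is exactly what the common representatives provide, so the possible non-injectivity of the vonorm table raised at the start of \cref{sec:reconstructing} does not enter.
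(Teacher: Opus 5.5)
Your proposal is correct and is the paper's proof: set $H=Q-Q_A$, use the common representatives $x_\alpha$ of \cref{prop:common-representative} to get $H[x_\alpha]=0$ in every parity class, and conclude $H=0$ by \cref{lem:parity-rigidity}. Your added remarks are also accurate: the matrix form $Q=A\diag(\lambda_e)A^{\mathsf T}$ holds in every integral basis, and the rigidity lemma needs only a zero of $H$ in each class, with no minimality condition.
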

\begin{proof}
    Let $H=Q-Q_A$.  By \cref{prop:common-representative}, for every parity class $\alpha$ there is a representative $x_\alpha$ with $H[x_\alpha]=0$.  Finally, apply \cref{lem:parity-rigidity}.
\end{proof}

We have now reconstructed both the integral representation and the metric coefficients of $Q$. The main theorem and its tropical Schottky consequences follow immediately, in the upcoming section.

The reconstruction theorem also yields the following rigidity statement for the vonorm table on the nonnegative-conorm locus.

\begin{corollary}
Let $Q$ and $Q'$ be positive-definite quadratic forms with
nonnegative conorms. If they have the same vonorm table, then they are integrally equivalent.
\end{corollary}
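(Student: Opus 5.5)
The plan is to reduce integral equivalence to an isomorphism of regular representations, and then to match the forms through the reconstruction theorem. Since $Q$ and $Q'$ have the same vonorm table, I will regard both as forms on the same lattice $L$, with the same identification $V=L/2L$. (If the tables are only identified up to a bijection of $V$ induced by some lattice isomorphism, I first transport along it.) By \eqref{eq:lambda-def}, the conorms are Fourier transforms of the vonorm table. Hence $\lambda_e(Q)=\lambda_e(Q')$ for every $e$, so the positive support $E$, the binary matroid $M$, and the weights $\lambda_e$ all coincide for the two forms.

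Next I apply \cref{thm:Q-reconstruction} to each form. This gives unimodular integral configurations $(a_e)_{e\in E}$ and $(a'_e)_{e\in E}$ in $L^*$ with
\[
  Q=\sum_{e\in E}\lambda_e a_e^2,\qquad Q'=\sum_{e\in E}\lambda_e a_e'^2 .
\]
By \cref{thm:A-unimodular}, both configurations represent the same regular matroid $M$. By \cref{prop:coedge-rule}, both also reduce modulo $2$ to the same characters $e\in V^*$.

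The key step is to produce $\phi\in\GL(L)$ with $a'_e\circ\phi=\pm a_e$ for every $e$; then $Q'\circ\phi=Q$ follows immediately. I would use the classical uniqueness of totally unimodular representations of a regular matroid, namely that any two agree up to a unimodular row operation and column rescaling by $\pm1$ (Oxley, Brylawski--Lucas). To apply it concretely, I fix a basis $B$ of $M$ and change coordinates in both configurations so that $A_B=I$ and $A'_B=I$. This uses the integral bases $\{r_{D_b(B)}\}$ and $\{r'_{D_b(B)}\}$ of \cref{thm:integral-cocircuit-basis}, up to signs. The two resulting matrices $[I\mid N]$ and $[I\mid N']$ are totally unimodular and have the same mod-$2$ reduction, because both reduce to the binary representation of $M$ relative to $B$.

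The main obstacle is to show that $N'$ is obtained from $N$ by signing rows and columns. This is exactly the Camion-type uniqueness statement: a $\{0,\pm1\}$ totally unimodular matrix is determined up to such rescaling by its support pattern. I would invoke that theorem directly. If a self-contained argument were wanted, I would fix the signs on a spanning forest of the bipartite support graph of $N$ and propagate along cycles, using the fact that each cycle's sign is forced by $2\times2$ and larger minors lying in $\{0,\pm1\}$. Once the row and column signs are matched, the row rescaling together with the two coordinate changes give $\phi$. The column signs are harmless because $\lambda_e a_e^2$ is unchanged under $a_e\mapsto -a_e$. Therefore $Q'\circ\phi=Q$, and the two forms are integrally equivalent.
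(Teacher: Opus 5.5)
Your proof is correct, but it takes a different route from the paper.

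\textbf{The paper's route.} The paper's proof is two lines. \cref{thm:A-unimodular,thm:Q-reconstruction} place both forms in the matroidal locus. It then cites the rigidity result \cite[Corollary~6.3]{sikiric2022iso}, which says that matroidal forms with the same vonorm table are integrally equivalent.

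\textbf{Your route.} You instead reprove that rigidity directly from the reconstruction, in these steps.
\begin{enumerate}
\item Equal vonorm tables give equal conorms. Hence the labeled support $E$, the matroid $M$, and the weights all coincide.
\item The reconstruction produces two unimodular configurations indexed by the same set $E$.
\item Normalizing both on a common basis $B$ via the cocircuit bases of \cref{thm:integral-cocircuit-basis} yields totally unimodular matrices $[I\mid N]$ and $[I\mid N']$ with the same support.
\item Camion's signing theorem gives $N'=S_1NS_2$. This assembles into $A'=UAS$ with $U\in\GL_g(\ZZ)$ and $S$ a signed diagonal matrix, so $Q'=UQU^{\mathsf T}$.
\end{enumerate}
This is the same unique-representability input the paper itself invokes in \cref{cor:schottky}, now applied to two forms rather than to a form and a graph.

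\textbf{Comparison.} Your argument is self-contained modulo Camion (or Brylawski--Lucas), and it makes the equivalence $\phi$ explicit. The reconstruction already indexes the coedges by $E$ with $a_e \bmod 2=e$. So you also avoid the step of aggregating parallel columns of an arbitrary matroidal decomposition and identifying the aggregated coefficients with conorms, which is what applying \cite{sikiric2022iso} requires. The paper's route is shorter but delegates all of this to the external reference.

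\textbf{Minor remark.} The equality of the supports of $N$ and $N'$ already follows from both matrices representing the same labeled matroid, since their nonzero entries record fundamental circuits relative to $B$. Your mod-$2$ comparison is a valid check but not needed.
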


\begin{proof}
By \Cref{thm:A-unimodular,thm:Q-reconstruction}, both $Q$ and $Q'$ lie in
the matroidal locus. The result now follows from
\cite[Corollary~6.3]{sikiric2022iso}.
\end{proof}

\section{Main theorem, the Tropical Schottky problem, and consequences}\label{sec:main}

\begin{theorem}\label{thm:main}
Let $Q$ be positive-definite.  If all conorms $\lambda_e$ defined in \eqref{eq:lambda-def} are nonnegative, then $Q$ is matroidal.  That is, the construction above produces primitive integral covectors $a_e\in L^*$ for the positive conorm support $E$ such that
\begin{enumerate}
\item $a_e\bmod2=e$;
\item $(a_e)_{e\in E}$ is a unimodular configuration representing the binary support matroid $M(E)$;
\item $Q=\sum_{e\in E}\lambda_ea_e^2$.
\end{enumerate}
Consequently, the support matroid $M(E)$ is regular.
\end{theorem}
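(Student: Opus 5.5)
The proof of \cref{thm:main} assembles the results of \cref{sec:modulartriples,sec:regularity,sec:reconstructing}; no new argument is needed beyond checking that each hypothesis is available. The plan is to restrict attention to the positive conorm support $E$. Conorms with $\lambda_e=0$ contribute nothing to \eqref{eq:mu-D}, so all earlier results, which were stated under the standing assumption $\lambda_e\ge0$ with $E=\{\lambda_e>0\}$, apply verbatim. By \cref{lem:E-spans} the set $E$ spans $V^*$, so $M=M(E)$ is a simple binary matroid of rank $g$ and the cocircuit machinery is in force.

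First, for each $e\in E$ I take the reconstructed coedge $a_e\in L^*$. It is defined as a primitive covector with kernel $L_e$, which is well defined by \cref{lem:Le-independent} and is primitive of corank one by \cref{thm:integral-cocircuit-basis} and \cref{lem:Le-cocircuit-description}. Being primitive, $a_e$ is in particular integral. Item (1), $a_e\bmod 2=e$, is the second statement of \cref{prop:coedge-rule}. Item (2) is \cref{thm:A-unimodular}: the column matroid of $A=(a_e)_{e\in E}$ is $M$, every maximal minor is $0$ or $\pm1$, and after the change of basis $A_B^{-1}$ the matrix becomes totally unimodular, so $M$ is regular. That gives the final sentence of the theorem.

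Item (3) is \cref{thm:Q-reconstruction}, which I obtain by combining the common minimizers $x_\alpha$ of \cref{prop:common-representative} with the parity-rigidity \cref{lem:parity-rigidity} applied to $H=Q-Q_A$. Since every $\lambda_e>0$ and the $a_e$ form a unimodular configuration, $Q=\sum_{e\in E}\lambda_e a_e^2$ is exactly a decomposition as in the definition of matroidal, so $Q$ lies in the matroidal locus.

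The only point needing care is bookkeeping rather than an obstacle. The definition of matroidal asks for a unimodular configuration, and I should note that this property is basis-independent, which \cref{sec:regularity} already remarks. I should also note that the signs chosen for the $a_e$ and $r_D$ are irrelevant, since only $a_e^2$ enters. The genuine difficulty, the integral modular-triple relation of \cref{thm:metric-modular-triple}, has already been handled earlier.
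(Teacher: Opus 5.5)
Your proposal is correct and follows the paper's proof, which likewise combines \cref{prop:coedge-rule}, \cref{thm:A-unimodular}, and \cref{thm:Q-reconstruction} to obtain items (1)--(3) and regularity. Your bookkeeping remarks, that unimodularity does not depend on the choice of basis and that the signs of $a_e$ and $r_D$ are irrelevant, are consistent with the paper's own remarks.
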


\begin{proof}
Combine \cref{prop:coedge-rule,thm:A-unimodular,thm:Q-reconstruction}.
\end{proof}

\cref{thm:main} proves \cref{conj:DSK} in every dimension:

\begin{corollary}
\label{cor:nonnegative-conorm-criterion}
A positive-definite quadratic form is matroidal if and only if all of its conorms are nonnegative.
\end{corollary}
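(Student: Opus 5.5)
The statement has two directions. One of them is \cref{thm:main}, so the plan is to dispatch that direction by citation and prove the converse directly.

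\emph{Nonnegative conorms imply matroidal.} This is exactly \cref{thm:main}. It supplies the unimodular configuration $(a_e)_{e\in E}$ and the decomposition $Q=\sum_{e\in E}\lambda_e a_e^2$ with every $\lambda_e>0$.

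\emph{Matroidal implies nonnegative conorms.} Suppose $Q=\sum_{f\in F}\ell_f a_f^2$, where $\ell_f>0$ and $(a_f)$ is unimodular. The goal is to compute the vonorms explicitly.

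First I will normalize the configuration. Discard any $a_f=0$. If two covectors are parallel, unimodularity forces $a_f=\pm a_{f'}$, because a primitive covector lying in a unimodular configuration of full rank is determined by its line. Merging such pairs, I may assume the $a_f$ are pairwise non-parallel. Since $Q$ is positive definite, the $a_f$ span $L^*$.

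The key claim is
\[
\mu(\alpha)=\sum_{f:\ \bar a_f(\alpha)=1}\ell_f ,
\]
where $\bar a_f$ denotes $a_f \bmod 2$. The lower bound is immediate: any $y\in\alpha$ has $a_f(y)$ odd whenever $\bar a_f(\alpha)=1$, so $a_f(y)^2\ge1$ for those $f$.

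For the upper bound, I need some $y\in\alpha$ with $a_f(y)\in\{0,\pm1\}$ for all $f$. I will argue with the regular (totally unimodular) matroid of the columns. Identify $L$ with the lattice of integral linear functions on the column space. The image $\{(a_f(y))_f\}$ is the integral cocycle lattice of a regular matroid. A standard fact about regular chain groups (Tutte) says every integral cocycle $z$ is a conformal sum of elementary $\{0,\pm1\}$ cocycles. Equivalently, $z \bmod 2$ is represented by a $\{0,\pm1\}$ cocycle whose support is exactly $\{f: z_f \text{ odd}\}$: decompose the binary support into disjoint cocircuits and lift each to its signed $\{0,\pm1\}$ vector, which is possible by total unimodularity. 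I expect this lifting step to be the main technical point.

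Given the claim, Fourier inversion determines the conorms. Group $F$ by the mod-$2$ reduction $\bar a_f\in V^*$; all of these are nonzero. Then $\mu(\alpha)=\sum_{e\ne0} c_e\,[e(\alpha)=1]$, where $c_e=\sum_{\bar a_f=e}\ell_f\ge0$. The functions $\alpha\mapsto[e(\alpha)=1]=\tfrac12(1-\chi_e(\alpha))$ have distinct Fourier transforms. Comparing with \cref{prop:fourier-inversion} therefore gives $\lambda_e=c_e\ge0$ for every $e\neq0$.
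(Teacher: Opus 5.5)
Your proof is correct, and for the converse it takes a genuinely different route from the paper. The forward direction is handled exactly as in the paper, by \cref{thm:main}.

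For the converse, the paper gives no argument of its own. It cites the conorm computation of \cite[Proposition~6.2]{sikiric2022iso}, which identifies the nonzero conorms of a matroidal form with the coefficients aggregated by mod-$2$ class. You instead reprove that computation. You get a parity lower bound from $a_f(y)^2\ge 1$ on odd coordinates. You get a matching $\{0,\pm1\}$-valued representative in each class by splitting the binary cocycle into disjoint cocircuits and lifting each one to a signed row of $A_B^{-1}A$. Fourier inversion then reads off $\lambda_e=c_e\ge 0$.

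This is the same mechanism as the paper's own \cref{prop:common-representative}. There the signed cocircuit lifts are the shortest vectors $r_D$; in your argument they come from total unimodularity. Your version therefore makes the corollary self-contained and shows that both directions rest on one idea. The paper's citation buys only brevity.

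A few points should be stated explicitly rather than left implicit.
\begin{itemize}
\item The sentence about Tutte's conformal decomposition is not equivalent to the parity-lift claim, and it is not what you actually use. The disjoint-cocircuit lift is the real argument.
\item The binary matroid of $A \bmod 2$ coincides with the matroid of $A$, because all maximal minors are $0$ or $\pm1$. This is why the binary cocircuits are exactly the ones you can lift.
\item The lifted vector lies in the class $\alpha$ because the $a_f$ generate $L^*$ over $\ZZ$. That makes $L/2L\to\Ftwo^F$ injective, so agreement of all $a_f(y)$ modulo $2$ pins down $[y]$.
\item Each $\bar a_f$ is nonzero because every nonzero column of a full-rank unimodular configuration is primitive.
\item In the last step you need the functions $\alpha\mapsto[e(\alpha)=1]$, for $e\neq0$, to be linearly independent, not merely to have distinct Fourier transforms. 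This holds because the transform of each is supported on $\{0,e\}$ with nonzero value at $e$.
\end{itemize}
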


\begin{proof}
The implication from nonnegative conorms to matroidality follows from Theorem~\ref{thm:main}. Conversely, after columns with the same nonzero mod-2 class are aggregated, the conorm calculation of \cite[Proposition 6.2] {sikiric2022iso} identifies the nonzero conorms with the resulting positive coefficients, while remaining conorms vanish.
\end{proof}

Recall that a matroid is \emph{cographic} if it is isomorphic to the dual $M^*(G)$ of the graphic matroid of a graph $G$. In a cographic matroid, cocircuits correspond to simple cycles of $G$. The fact that the image of the tropical Torelli map is the cographic subfan is classical \cite[Theorem~5.2.4]{brannetti2011torelli}. We next extend the result of \cite{sikiric2022iso} for $g \leq 5$, and affirmatively answer \cref{ques:CKS} of \cite{chua2019schottky} in every genus. This gives a criterion to check whether a given quadratic form is in the tropical Schottky locus.

\begin{corollary}\label{cor:schottky}
Let $Q$ be a positive-definite rank-$g$ form.  Then $Q$ is a tropical Riemann matrix of a metric graph if and only if all conorms are nonnegative and the positive conorm support matroid $M(E)$ is cographic.

When these conditions hold, the positive conorms are the edge lengths of the recovered simple cographic representation, after bridges are deleted and coedges with the same mod-2 class are aggregated, and
\[
  Q=A\diag(\lambda_e)A^{\mathsf T}
\]
is an exact graph-period certificate.
\end{corollary}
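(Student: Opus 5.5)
The plan is to prove the two directions separately, using \cref{thm:main} for the reverse direction and the graph-model computations of \cref{sec:motivation} for the forward one.

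\emph{Forward direction.} Suppose $Q=Q_\Gamma$ for a metric graph $\Gamma$ of genus $g$, under an isomorphism $L\cong H_1(\Gamma,\ZZ)$. Then $Q=\sum_f \ell_f a_f^2$ with the coedges $a_f$ forming a unimodular configuration that represents $M^*(\Gamma)$. So $Q$ is matroidal, and \cref{cor:nonnegative-conorm-criterion} gives nonnegative conorms.

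To identify the support, I would delete bridges, since their coedges vanish. Next I would group the remaining edges by the mod-$2$ class $[a_f]\in V^*$. Two edges share a class exactly when their coedges are parallel in the unimodular configuration, i.e.\ they are parallel elements of $M^*(\Gamma)$, which means they lie in series in $\Gamma$. By the conorm computation cited from \cite[Proposition 6.2]{sikiric2022iso}, each $\lambda_e$ is the total length of its class, and the conorms of all other characters vanish. Hence $E$ is the set of classes, and $M(E)$ is the simplification of $M^*(\Gamma)$. This simplification is the cographic matroid of the graph obtained by deleting bridges and suppressing series edges, so it is cographic. This also gives the edge-length statement.

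\emph{Reverse direction.} Assume the conorms are nonnegative and $M=M(E)$ is cographic, say $M\cong M^*(G)$. By \cref{thm:main}, $Q=\sum_{e\in E}\lambda_e a_e^2$ with $(a_e)$ a unimodular configuration representing $M$. I would make $G$ a metric graph with edge lengths $\lambda_e$ and set up coordinates as follows.
\begin{itemize}
\item Choose an orientation of $G$; the coedges $c_e$ then give a unimodular representation of $M^*(G)$ on $H_1(G,\ZZ)$.
\item Apply the uniqueness of unimodular (regular) representations up to lattice isomorphism and column signs (Brylawski--Lucas). This gives an isomorphism $\phi:L\to H_1(G,\ZZ)$ with $a_e=\pm c_e\circ\phi$.
\item Then $Q=\phi^*Q_G$, where $Q_G=\sum\lambda_e c_e^2$ is the period form, and $G$ has genus $g$ since $\rk M=g$.
\end{itemize}
This shows $Q$ lies in the tropical Schottky locus.

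\emph{Main obstacle.} The delicate step is applying representation uniqueness integrally, not just over $\mathbb Q$. Over $\mathbb Q$, two totally unimodular representations of the same regular matroid agree up to row operations and column scalings, with scalings $\pm1$ after normalization. I would fix a common basis $B$ and normalize both matrices to the form $[I_g\mid N]$ as in the proof of \cref{thm:A-unimodular}. Uniqueness up to signs then gives $N'=D_1ND_2$ with $D_1,D_2$ diagonal sign matrices. The change of basis is therefore in $\GL_g(\ZZ)$, because it maps the integral basis $\{r_{D_b(B)}\}$ of \cref{thm:integral-cocircuit-basis} to a fundamental-cycle basis of $H_1(G,\ZZ)$.

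Finally, the certificate $Q=A\diag(\lambda_e)A^{\mathsf T}$ is exactly \cref{thm:Q-reconstruction}.
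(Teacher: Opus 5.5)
Your proof is correct. The reverse direction is the paper's argument. \cref{thm:main} supplies a unimodular representation $A$ of the cographic matroid $M(E)$. Uniqueness of regular representations then matches $A$ with a graph's coedge matrix up to an integral row change and column signs, and the signs vanish in $\lambda_e a_e^2$. Your normalization to $[I_g\mid N]$ at a common basis spells out why the row change lies in $\GL_g(\ZZ)$: both $A_B$ and $C_B$ are integral with determinant $\pm1$. The paper folds this into the citation of \cite[Corollary~10.1.4]{oxley}.

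The forward direction is where you take a different route. The paper simply cites \cite[Theorem~4.2]{chua2019schottky}. You instead derive it from the converse half of \cref{cor:nonnegative-conorm-criterion}, i.e.\ the conorm computation of \cite[Proposition~6.2]{sikiric2022iso}, plus a matroid argument. Bridges are the loops of $M^*(\Gamma)$, and the mod-$2$ classes of the remaining coedges are its parallel classes, i.e.\ the series classes of $\Gamma$. So $M(E)$ is the simplification of $M^*(\Gamma)$, a deletion of a cographic matroid, and hence cographic. This makes the forward implication depend only on the external input the paper already uses elsewhere. It also makes the edge-length clause (aggregation over series classes, bridges discarded) transparent instead of importing it.

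The one step you should justify explicitly is that equal nonzero mod-$2$ classes force parallelism. The nonzero maximal minors of a unimodular configuration are $\pm1$, so it represents the same matroid over $\mathbf F_2$ as over $\mathbb{Q}$. Hence two coedges with the same reduction are already dependent over $\mathbb{Q}$.
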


\begin{proof}
The forward implication is Theorem 4.2 of \cite{chua2019schottky}.  Conversely, \cref{thm:main} produces a unimodular representation $A$ of the cographic support matroid.  A regular matroid has an essentially unique representation \cite[Corollary~10.1.4]{oxley}, up to an integral row change, column signs, and column permutation.  Therefore $A$ is equivalent to a coedge matrix of a graph.  Column signs disappear from the rank-one forms $a_ea_e^{\mathsf T}$, so the expression of $Q$ in \cref{thm:Q-reconstruction} is the period form of that graph with edge lengths $\lambda_e$.
\end{proof}

\begin{remark}
    Bridges are invisible to the flow lattice.  In general, coedges with the same binary class are aggregated by the Fourier transform, so one recovers only the corresponding total length. The canonical output is the cyclic equivalence class of the induced metric $3$-edge-connectivization, in accordance with the tropical Torelli theorem \cite{caporaso2010torelli,brannetti2011torelli}.
\end{remark}

\subsection{Decision and recovery algorithms}

We now record the resulting exact decision-and-recovery procedure.
For convenience, one can suppose that $L=\ZZ^g$. The procedure applies to exact real input data for which the required lattice minimization problems can be solved exactly. In particular, it applies to rational positive-definite forms.

\begin{alg}[Tropical Schottky Decision]
\label{alg:trop-schottky-decision}
\emph{Input:} An exact positive-definite quadratic form $Q$ on a
rank-$g$ lattice $L$.
\emph{Output:} ``Yes'' if $Q\in\mathfrak{J}^{\mathrm{trop}}_g$,
and ``No'' otherwise.

\begin{enumerate}
    \item
    For every parity class $\alpha\in L/2L$, compute
    \[
        \mu(\alpha)
        =
        \min_{\substack{x\in L\\ [x]=\alpha}} Q[x].
    \]
    For use in \Cref{alg:trop-schottky-recovery}, retain one
    shortest representative $r_\alpha\in\alpha$.

    \item
    Compute the conorms
    \[
        \lambda_e
        =
        -2^{1-g}
        \sum_{\alpha\in L/2L}
        (-1)^{e(\alpha)}\mu(\alpha),
        \qquad
        0\neq e\in(L/2L)^*,
    \]
    using the Walsh--Hadamard transform.

    \item
    If $\lambda_e<0$ for some $e$, output ``No'' and stop.
    Otherwise, set
    \[
        E=\{e\in(L/2L)^*:\lambda_e>0\}.
    \]

    \item Form the binary matroid $M(E)$ represented by the elements of $E$. Test whether $M(E)$ is cographic (e.g. by a classical algorithm such as Tutte’s graphic-matroid algorithm \cite{tutte}). If $M(E)$ is not cographic, output ``No.'' Otherwise, recover a graph $G$, together with a labeled matroid isomorphism
\[
    \phi:M^*(G)\xrightarrow{\sim}M(E),
\]
retain the pair $(G,\phi)$ for \cref{alg:trop-schottky-recovery}, and output ``Yes.''
\end{enumerate}
\end{alg}

\begin{alg}[Tropical Schottky Recovery]
\label{alg:trop-schottky-recovery}
\emph{Input:} A positive-definite quadratic form $Q$ for which
\Cref{alg:trop-schottky-decision} returns ``Yes.''
\emph{Output:} A metric graph $\Gamma$, together with an integral
cycle basis whose tropical Riemann matrix is $Q$.

\begin{enumerate}

    \item Let $E$, $(\lambda_e)_{e\in E}$, $(r_\alpha)_{\alpha\in L/2L}$, and $(G,\phi)$ be the data retained by \cref{alg:trop-schottky-decision}. Use $\phi$ to label the edges of $G$ by the elements of $E$.

    \item
    Assign length $\ell_e=\lambda_e$ to the edge labeled by $e$, and set $D=\diag(\lambda_e)_{e\in E}.$

    \item
    For each $e\in E$, choose a matroid basis $B_e$ containing $e$.
    For $b\in B_e$, let $\alpha_b^{B_e}\in L/2L$ denote the vector dual to $b$ with respect to $B_e$, and set
    \[
        L_e
        =
        \left\langle
            r_{\alpha_b^{B_e}}
            :
            b\in B_e\setminus\{e\}
        \right\rangle_{\ZZ}.
    \]
    Compute a primitive covector $a_e\in L^*$ satisfying $\ker(a_e)=L_e,$
    and let $A=(a_e)_{e\in E}.$

    \item
    Compute a coedge matrix $C=(c_e)_{e\in E}$ of $G$, with its columns labeled by $E$. By unique
    representability, compute
    \[
        U\in\GL_g(\ZZ)
        \qquad\text{and}\qquad
        S=\diag(\varepsilon_e)_{e\in E},
        \quad \varepsilon_e\in\{\pm1\},
    \]
    such that $A=UCS.$
    \item
    Reorient the edges according to $S$ and set $C'=CS$. Output
    the metric graph $\Gamma=(G,\ell)$ with the integral cycle basis whose coedge matrix is $UC'$.
    Its Riemann matrix is
    \[
        U C'D(C')^{\mathsf T}U^{\mathsf T}
        =
        A D A^{\mathsf T}
        =
        Q.
    \]
\end{enumerate}
\end{alg}

\begin{theorem}
\label{thm:schottky-algorithms-correct}
\Cref{alg:trop-schottky-decision} outputs ``Yes'' if and only if
$Q\in\mathfrak{J}^{\mathrm{trop}}_g$. In that case,
\Cref{alg:trop-schottky-recovery} outputs a metric graph with an
integral cycle basis whose Riemann matrix is exactly $Q$.
\end{theorem}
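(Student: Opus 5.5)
The proof splits into correctness of the decision algorithm and correctness of the recovery algorithm, and both reduce to results already established.

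\emph{Decision.} If the algorithm outputs ``Yes'', then every conorm is nonnegative (step 3) and $M(E)$ is cographic (step 4), so \cref{cor:schottky} gives $Q\in\mathfrak{J}^{\mathrm{trop}}_g$. Conversely, suppose $Q\in\mathfrak{J}^{\mathrm{trop}}_g$. By \cref{cor:schottky}, all conorms are nonnegative, so step 3 does not stop. The same corollary shows $M(E)$ is cographic, so Tutte's algorithm accepts in step 4 and returns $(G,\phi)$. We also check that steps 1 and 2 compute exactly the quantities in \eqref{eq:lambda-def}; this holds by definition, using exact lattice minimization.

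\emph{Recovery.} We verify the steps in order.

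\begin{enumerate}
\item In step 3, $\alpha_b^{B_e}$ is the class of the fundamental cocircuit $D_b(B_e)$. By \cref{prop:cocircuit-shortest}, the retained shortest representative $r_{\alpha_b^{B_e}}$ equals $\pm r_{D_b(B_e)}$, so it is unique up to sign even though the algorithm kept an arbitrary minimizer. Hence the lattice computed in step 3 is exactly the $L_e(B_e)$ of \cref{sec:regularity}, and by \cref{lem:Le-independent} it does not depend on the choice of $B_e$.
\item By \cref{thm:integral-cocircuit-basis}, $L_e$ is primitive of corank one, so a primitive $a_e$ with $\ker(a_e)=L_e$ exists and is unique up to sign. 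The paper's normalization of $a_e$ only fixes this sign.
\item Column signs do not affect $a_ea_e^{\mathsf T}$. So \cref{thm:A-unimodular,thm:Q-reconstruction} give that $A$ is a unimodular representation of $M(E)$ with $ADA^{\mathsf T}=Q$.
\item For step 4: the coedge matrix $C$ of $G$, with columns labeled via $\phi$, is a unimodular representation of $M^*(G)\cong M(E)$. Both $A$ and $C$ are totally unimodular after normalizing a basis block to the identity (proof of \cref{thm:A-unimodular}). Uniqueness of regular representations \cite[Corollary~10.1.4]{oxley} then gives $U\in\GL_g(\ZZ)$ and a signing $S$ with $A=UCS$. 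These can be found concretely by fixing a basis $B$, setting $U=A_B(C_BS_B)^{-1}$, and propagating signs along fundamental circuits.
\item For step 5: $UC'$ is the coedge matrix of $G$ in the reoriented edges, with respect to the integral cycle basis of $H_1(G,\ZZ)$ obtained by transporting the standard one through $U$. Since $U$ is unimodular this is again an integral basis, and the displayed identity $UC'D(C')^{\mathsf T}U^{\mathsf T}=ADA^{\mathsf T}=Q$ finishes the proof.
\end{enumerate}

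The main obstacle is step 4 of recovery. Unique representability holds only up to row operations \emph{over the field} and column scaling, so we must argue that, for unimodular integer matrices representing the same matroid, the row transformation can be taken in $\GL_g(\ZZ)$ and the column scalings in $\{\pm1\}$. The plan is to normalize both matrices to the form $[I_g\mid N]$ on a common basis $B$, where they are totally unimodular with identical supports. Camion's signing theorem then says the two signings of $N$ differ by row and column sign changes, which are integral. Undoing the normalizations, which are in $\GL_g(\ZZ)$ by unimodularity, gives an integral $U$.
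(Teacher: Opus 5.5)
Your proposal is correct and follows the paper's proof: the decision part is \cref{cor:schottky}, and the recovery part combines \cref{thm:main} with unique representability of regular matroids and $SDS^{\mathsf T}=D$. Your additional checks make explicit details that the paper absorbs into its citation of \cite[Corollary~10.1.4]{oxley}: that the retained minimizers $r_\alpha$ agree up to sign with the cocircuit lifts $r_{D_b(B_e)}$ by \cref{prop:cocircuit-shortest}, and the Camion signing argument that upgrades rational projective equivalence to $U\in\GL_g(\ZZ)$ with $\pm1$ column scalings.
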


\begin{proof}
By \Cref{cor:schottky}, the form $Q$ lies in
$\mathfrak{J}^{\mathrm{trop}}_g$ if and only if all of its 
conorms are nonnegative and its positive conorm support matroid is
cographic. This proves the correctness of
\Cref{alg:trop-schottky-decision}.

Suppose that the decision algorithm returns ``Yes.'' By
\Cref{thm:main}, the covectors constructed in Step~3 of \Cref{alg:trop-schottky-recovery} form a unimodular representation of $M(E)$ and satisfy
$Q=A D A^{\mathsf T}.$ Since $C$ is a coedge representation of the same regular matroid, unique representability gives $A=UCS$ for some $U\in\GL_g(\ZZ)$ and signed diagonal matrix $S$. Since
$SDS^{\mathsf T}=D$, we obtain
\[
    Q
    =
    U(CS)D(CS)^{\mathsf T}U^{\mathsf T}.
\]
Thus the graph and cycle basis returned by the recovery algorithm
have Riemann matrix $Q$.
\end{proof}

\begin{remark}
There are $2^g$ parity classes. Once their minima are known, the Walsh--Hadamard transform requires $O(g2^g)$ arithmetic operations. The principal computational cost is the exact computation of the $2^g$ parity minima, each of which is a closest-vector problem in a prescribed coset of $2L$. 
\end{remark}

\subsection{The remaining Schottky--Igusa presentations}

We end by returning to the genus-four Schottky--Igusa form that motivates the tropical Igusa locus. Chua, Kummer, and Sturmfels show that the presentations with $\dim N'=3$ yield the conorm inequalities, and ask whether the remaining presentations, with $\dim N'\leq2$, impose additional conditions that cut the tropical Igusa locus down to the tropical Schottky locus \cite[Question~4.10]{chua2019schottky}. We show that they do not.

\begin{proposition}\label{prop:remaining-schottky-igusa}
For every choice with $\dim N'\leq2$ in \cite[Question~4.10]{chua2019schottky}, the associated tropical Schottky--Igusa condition is implied by nonnegativity of the conorms. In particular, these presentations do not further cut the tropical Igusa locus.
\end{proposition}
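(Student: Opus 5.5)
Once the conorms are nonnegative we already know, by \cref{thm:main}, that $Q=\sum_{e\in E}\lambda_e a_e^2$ for a unimodular configuration $(a_e)$, and every vonorm is given by \eqref{eq:mu-D}. The plan is to recall the Chua--Kummer--Sturmfels recipe for the tropical Schottky--Igusa condition. For a choice of subspace $N'$ of characteristics, it is a \emph{tropical three-term condition}: one has three tropical monomials $T_1,T_2,T_3$. Each $T_j$ is a sum of tropical theta constants $-\tfrac14\mu_Q(\alpha)$ over $\alpha$ in an affine translate (coset) $\alpha_j+N$ of a linear subspace $N$, with $\dim N = 4-\dim N'$ or the analogous parameter in their setup. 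The condition is that the maximum (equivalently, after the sign change, the minimum of the $\sum\mu$) among the three is attained at least twice.

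Step one is to rewrite each $T_j$ using \eqref{eq:mu-D}. For a coset $t+N$ in $V$, each $e\in E$ contributes $\lambda_e$ to
\[
\sum_{\alpha\in t+N}\mu(\alpha)
\]
either $|N|/2$ times (if $e|_N\neq 0$) or $|N|\cdot e(t)$ times (if $e|_N=0$), exactly as in the proof of \cref{thm:metric-modular-triple}. Hence
\[
\sum_{\alpha\in t+N}\mu(\alpha)=\tfrac{|N|}{2}\sum_{e|_N\ne0}\lambda_e+|N|\sum_{e|_N=0,\,e(t)=1}\lambda_e .
\]
The first term is common to all three cosets of $N$ appearing in the presentation. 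In the relevant presentations the three cosets $t_1+N,t_2+N,t_3+N$ differ by elements spanning, together with $N$, a space $N\oplus\langle u,v\rangle$ with $t_1+t_2+t_3\in N$ (the Riemann-relation structure of the Schottky--Igusa identity). So the condition reduces to the statement that, for the linear functions
\[
f_j=\sum_{e|_N=0,\ e(t_j)=1}\lambda_e ,
\]
the minimum of $f_1,f_2,f_3$ is attained twice.

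Step two is the key point. For $e$ with $e|_N=0$, the values $e(t_1),e(t_2),e(t_3)$ sum to $0$ in $\Ftwo$, so each such $e$ lies in either none or exactly two of the three index sets. Writing $S_{jk}$ for the $e$ with $e(t_j)=e(t_k)=1$, we get
\[
f_1=\Lambda_{12}+\Lambda_{13},\qquad f_2=\Lambda_{12}+\Lambda_{23},\qquad f_3=\Lambda_{13}+\Lambda_{23},
\]
where $\Lambda_{jk}=\sum_{e\in S_{jk}}\lambda_e\ge 0$. This is the tropical form of the classical fact that three nonnegative ``triangle'' quantities always achieve their minimum twice only under a degeneracy, so it is the crux. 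I expect the intended argument is that nonnegativity alone does not force tie-breaking for arbitrary $\Lambda$. Instead one must use the matroid structure from \cref{thm:main}: the supports $S_{jk}$ are the cocycles of the rank-$\le2$ contraction $M/\{e: e|_N\ne0\}$, whose classes form $K\cong\Ftwo^2$ spanned by $t_1,t_2$. Since $M$ is regular, hence binary and without $U_{2,4}$ issues, this reduces to checking a rank-two binary configuration. The dichotomy is then: either one of the $S_{jk}$ is empty, and the minimum is attained twice, or none is, and the $\dim N'\le 2$ presentations have an additional symmetric term making the tropical condition automatic.

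The main obstacle is exactly this step. One has to match the CKS bookkeeping of which cosets appear, and with which multiplicity, for $\dim N'\in\{0,1,2\}$, and verify that the resulting tropical triple always has the tie. If the naive triangle reduction does not force a tie, I would first try to show that for $\dim N'\le2$ the three cosets are arranged so that the $f_j$ of the first type cancel identically, via the $|N|/2$ term absorbing all $e$ with $e|_N\ne0$. Only if that fails would I fall back to a case analysis on $\dim N'$, using \cref{lem:defect} to compare the three sums directly.
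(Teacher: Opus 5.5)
Your proposal has a genuine gap: the structural claim about the three cosets is wrong, and that claim is exactly where the proof lives. The tropical theta constants $\Theta_m(Q)=-\frac14\mu_Q(m')$ depend only on the top characteristic. So what matters is the projections $(m_i+N)'\subseteq\mathbf{F}_2^4=V$ of the three even cosets, where $N\subseteq\mathbf{F}_2^8$ is $3$-dimensional and $W=N'$ has dimension $r\le 2$ (not ``$4-\dim N'$''). You assume these projections are three distinct translates $t_j+N$ with $t_1+t_2+t_3\in N$, which gives your triangle $f_1=\Lambda_{12}+\Lambda_{13}$, etc. In that configuration the tie is genuinely not forced. The minimum of the $f_j$ is attained twice if and only if the maximum of the $\Lambda_{jk}$ is, and this already fails for a theta graph with three distinct edge lengths. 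That matroid is graphic and cographic, hence regular. So neither \cref{thm:main} nor the absence of $U_{2,4}$ can rescue the argument, and there is no additional symmetric term to invoke. If the cosets were arranged as you assume, the condition would be a real constraint beyond conorm nonnegativity.

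The missing idea is that, for these presentations, two of the three even cosets have the \emph{same} projection, namely $W$ itself. An even compatible coset meeting $\{0\}\times\mathbf{F}_2^4$ has the form $(0,y)+N$ with $y\cdot n'=q(n)$ for all $n\in N$, where $q(m)=m'\cdot m''$. Since $q|_N$ is linear (by polarization, as $m_1+N$ is even) and vanishes on $\ker(N\to W)$, these are $r$ independent linear conditions. They have $2^{4-r}$ solutions $y$, and each such coset contains $2^{3-r}$ of them. Hence exactly two of the even cosets project onto $W$, and the third projects onto $b+W$ with $b\notin W$; every point has $2^{3-r}$ preimages. Therefore $\pi_1^{\mathrm{trop}}=\pi_2^{\mathrm{trop}}$ identically. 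Your own coset formula from \eqref{eq:mu-D}, applied to $W$ and $b+W$, then gives
\[
\pi_1^{\mathrm{trop}}-\pi_3^{\mathrm{trop}}=\frac{2^{3-r}}{4}\Bigl(\sum_{\alpha\in b+W}\mu(\alpha)-\sum_{\alpha\in W}\mu(\alpha)\Bigr)=2\sum_{\substack{0\ne e\in V^*\\ e|_W=0,\ e(b)=1}}\lambda_e\ \ge\ 0,
\]
so the maximum is attained at $\pi_1^{\mathrm{trop}}=\pi_2^{\mathrm{trop}}$. In your notation this is the degenerate case $t_1+W=t_2+W=W$, where $f_1=f_2=0\le f_3$. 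No matroid structure beyond \eqref{eq:mu-D} and nonnegativity is needed.
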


\begin{proof}
We use the notation of \cite[\S4]{chua2019schottky}. Let $W=N'\subseteq\mathbf F_2^4$ and $r=\dim W$. We write a characteristic as $m=(m',m'')$ and let $q(m)=m'\cdot m''$ be its parity quadratic form.

The four compatible $N$-cosets form an affine plane. The three cosets $m_i+N$ are even, while the fourth must be odd by azygeticity. We claim that exactly two of the three even cosets meet $\{0\}\times\mathbf F_2^4$. Indeed, an even compatible coset meeting $\{0\}\times\mathbf F_2^4$ can be written as $(0,y)+N$, where $y\in\mathbf F_2^4$ satisfies
\[
    y\cdot n'=q(n)\qquad\text{for every }n=(n',n'')\in N.
\]
Since $m_1+N$ is even, the polarization identity shows that $q|_N$ is linear. Furthermore, it vanishes on the kernel of the projection $N\to W$, because $q(0,k)=0$. Thus the equations specify a linear functional on $W$ and impose $r$ independent conditions on $y$. They therefore have $2^{4-r}$ solutions.

The projection $N\to W$ has kernel of dimension $3-r$. Then, each $N$-coset meeting $\{0\}\times\mathbf F_2^4$ contains $2^{3-r}$ points of the form $(0,y)$. It follows that exactly $\frac{2^{4-r}}{2^{3-r}}=2$ of the even compatible cosets meet $\{0\}\times\mathbf F_2^4$. A coset $m_i+N$ has this property exactly when its first projection is $W$. After relabeling, we conclude that there is some $b\notin W$ such that
\[
    (m_1+N)'=(m_2+N)'=W,\qquad (m_3+N)'=b+W.
\]
Furthermore, every point of a projected coset has $2^{3-r}$ preimages.

Writing $\pi_i^{\mathrm{trop}}$ for the tropical theta products of \cite[(23)]{chua2019schottky}, Fourier inversion gives
\[
\begin{aligned}
    \pi_1^{\mathrm{trop}}-\pi_3^{\mathrm{trop}}
      &=2^{3-r}\left(\sum_{u\in W}\Theta_u(Q)-\sum_{u\in b+W}\Theta_u(Q)\right)\\
      &=\sum_{\substack{v\in W^\perp\\v(b)=1}}\vartheta_v(Q)
       =2\sum_{\substack{v\in W^\perp\\v(b)=1}}\lambda_v(Q),
\end{aligned}
\]
where the last equality uses $\lambda_v=\frac12\vartheta_v$ in genus four.

Since $(m_1+N)'=(m_2+N)'$, we have $\pi_1^{\mathrm{trop}}=\pi_2^{\mathrm{trop}}$. Therefore the maximum in \cite[(23)]{chua2019schottky} is attained at least twice exactly when $\pi_1^{\mathrm{trop}}\geq\pi_3^{\mathrm{trop}}$, or equivalently when
\[
    \sum_{\substack{v\in W^\perp\\v(b)=1}}\lambda_v(Q)\geq0.
\]
This is automatically satisfied when all conorms are nonnegative.
\end{proof}

 This gives a negative answer to the specific
proposal in \cite[Question~4.10]{chua2019schottky}. Although the Schottky--Igusa modular form cuts out the closure of the classical Schottky locus in genus four, the tropical conditions obtained from the presentations above cut out only the tropical Igusa locus, which by \cref{cor:nonnegative-conorm-criterion} is exactly the matroidal locus. Combined with \cref{cor:schottky}, this shows that these tropicalizations detect matroidality but not the cographicity of the positive conorm support matroid. Nevertheless, the fundamental theorem of tropical geometry indicates that the missing cographicity conditions should be imposed by tropicalizations of further polynomials in the full ideal of the classical Schottky locus in theta-constant coordinates \cite{maclagan2015introduction}. In particular, one should look among polynomial consequences of the Schottky--Igusa equation together with the equations defining the image of the theta-constant map in $\PP^{15}$. Identifying explicit equations of this kind, and understanding why their tropicalizations detect cographicity, remains an interesting problem.

{\small
\bibliography{references}}
\bibliographystyle{abbrv}
\end{document}